\newtheorem{theorem}{Theorem}[section]
\newtheorem{definition}[theorem]{Definition}
\newtheorem{proposition}[theorem]{Proposition}
\newtheorem{corollary}[theorem]{Corollary}
\newtheorem{lemma}[theorem]{Lemma}
\newtheorem{remark}[theorem]{Remark}
\newtheorem{example}[theorem]{Example}
\newcommand{\supp}{{\rm supp}}
\newcommand{\dist}{{\rm dist}}
\title{Equidistribution of Zeros of Random Holomorphic Sections for Moderate Measures}
\author{Guokuan SHAO}
\begin{document}

\maketitle

\begin{abstract}
We establish an equidistribution theorem for the zeros of random holomorphic sections of high powers of a positive
holomorphic line bundle. The equidistribution is associated with a family of singular moderate measures.
We also give a convergence speed for the equidistribution.
\end{abstract}

\noindent
{\bf Classification AMS 2010:} 32A60, 32L10, 32U40.

\noindent
{\bf Keywords: } moderate measure, H\"{o}lder potential, random holomorphic section.

\section{Introduction}
Distribution of zeros of random polynomials is a classical subject.
Waring \cite{t} used a probabilistic method to determine the number of imaginary zeros of an algebraic polynomial.
More rigorous and systematic research started with the paper of Bloch-P\'{o}lya \cite{bp} in 1930s.
They gave an order of the expected number of real roots of certain random algebraic polynomial equation.
Littlewood-Offord, Hammersley, Kac and Erd\"{o}s-Tur\'{a}n developed this field motivated by Bloch-P\'{o}lya's work.
See \cite{bd, bs, sv} for a review and complete references.

We introduce the basic setting in the paper.
Let $\omega_{FS}$ be the standard K\"{a}hler form induced by the Fubini-Study metric on $\mathbb{P}^{k}$
normalized by $\int_{\mathbb{P}^{k}}\omega_{FS}^{k}=1$. Let $X$ be a projective manifold of dimension $k$, $L$ an ample line bundle over $X$.
Fix a Hermitian metric $h$ on $L$ such that the curvature form $\omega$ is K\"{a}hler on $X$.
Then $\omega$ represents the first Chern class $c_{1}(L)$
with $\int_{X}\omega^{k}=c_{1}(L)^{k}\in\mathbb{Z}^{+}$. Let $L^{n}$ be the $n$th tensor product of $L$.
Denote by $H^{0}(X,L^{n})$ the space of all holomorphic sections of $L^{n}$.
Let $\mathbb{P}H^{0}(X,L^{n})$ be the associated projective space. We denote by $\omega_{FS}$
its normalized Fubini-Study form. Set $k_{n}:=\dim\mathbb{P}H^{0}(X,L^{n})$,
where $k_{n}$ is given by the Hilbert polynomial whose dominant term is $c_{1}(L)^{k}n^{k}/k!$ \cite{ks}. Let $s_{n}\in\mathbb{P}H^{0}(X,L^{n})$.
Denote by $[\bf{Z}_{s_{n}}]$ the current defined by the zero set of $s_{n}$. Set $\mathbb{P}^{X}=\prod_{n\geq 1}\mathbb{P}H^{0}(X,L^{n})$.

In order to state our theorem, we also need the following terminologies.
Fix some exponent $0<\rho<1$,
a function $u: M\to\mathbb{R}$ defined on a metric space $(M, \rm dist)$
is said to be of class $\mathscr{C}^{\rho}$ with modulus $c$ if
\begin{equation*}
\sup_{\substack{x,y\in M \\x\neq y}}\frac{|u(x)-u(y)|}{\dist(x,y)^{\rho}}\leq c.
\end{equation*}
Consider a complex manifold $M$ with a fixed volume form, let $\gamma$ be a closed real current of bidegree $(1,1)$ on $M$.
An upper-semi continuous function $u: M\to [-\infty, \infty)$ in $L^{1}_{loc}(M)$ is said to be $\gamma$-p.s.h. if $dd^{c}u+\gamma\geq 0$.

Let $\sigma_{n}$ be the probability Lebesgue measure on $\mathbb{P}H^{0}(X,L^{n})$ and
$\sigma$ the product measure of these ones on $\mathbb{P}^{X}$.
Shiffman-Zelditch \cite{sz} proved that the sequence of currents $\{\frac{1}{n}[\bf{Z}_{s_{n}}]\}$ converges weakly to $\omega$ for $\sigma$-almost everywhere
$(s_{n})\in\mathbb{P}^{X}$. They used the potential-theoretic approach from Forn\ae ss-Sibony's work \cite{fs}.
Dinh-Sibony \cite{ds1} generalized the result and obtained a good estimate of the convergence speed
over a projective manifold endowed with a smooth positively-curved metric.
Consequently, they constructed a singular measure with real coefficients which satisfies equidistribution property (cf. \cite[Corollary 7.4]{ds1}).
Dinh-Ma-Marinescu \cite{dmm} recently established the equidistribution for a semipositive singular Hermitian line bundle.
When the Lebesgue measures in Shiffman-Zelditch's result are replaced by moderate measures with H\"{o}lder potentials (see Sections 2, 3),
we have our main theorem as follows which gives a concrete large family of singular moderate measures that satisfies equidistribution property.
It can be regarded as a perturbation of standard measures induced by Fubini-Study metric.
\begin{theorem}
Let $L$ be an ample line bundle over a projective manifold $X$ of dimension $k$ and $0<\rho<1$ an exponent.
Then there exists a constant $c=c(X,L,\rho)>1$ with the following property.
For each $n\geq 1$, $1\leq j\leq k_{n}$, let $u_{n,j}:\mathbb{P}H^{0}(X,L^{n})\rightarrow\mathbb{R}$ be a function and $\xi_{n},\epsilon_{n}>0$ two numbers such that£º

\noindent (i)\quad $u_{n,j}$ is of class $\mathscr{C}^{\rho}$ with modulus $\xi_{n}$, $\forall 1\leq j\leq k_{n}$;

\noindent (ii)\quad $u_{n,j}$ is $\epsilon_{n}\omega_{FS}$-p.s.h., $\forall 1\leq j\leq k_{n}$;

\noindent (iii)\quad $\xi_{n}\leq 1/c^{n^{k}}, \epsilon_{n}\leq 1/c^{n^{k}}$.

\noindent Let $\sigma_{n}=(dd^{c}u_{n,1}+\omega_{FS})\wedge(dd^{c}u_{n,2}+\omega_{FS})\wedge\cdot\cdot\cdot\wedge(dd^{c}u_{n,k_{n}}+\omega_{FS})$ be the probability measure on $\mathbb{P}H^{0}(X,L^{n})$.
Endow $\mathbb{P}^{X}$ with the product measure $\sigma=\prod_{n\geq 1}\sigma_{n}$.
Then for almost everywhere $s=(s_{n})\in\mathbb{P}^{X}$ with respect to $\sigma$,
the sequence of currents $\{\frac{1}{n}[\bf{Z}_{s_{n}}]\}$ converges weakly to $\omega$.
\end{theorem}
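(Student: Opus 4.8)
The plan is to pass through the Poincaré--Lelong formula, reduce the statement to the almost-sure decay of a single non-negative functional $F_n$, and establish that decay via a Chernoff-type large deviation bound (powered by the moderateness of the $\sigma_n$) together with the Borel--Cantelli lemma. Fix $n$, an $L^2(h^n,\omega^k)$-orthonormal basis $(S_0,\dots,S_{k_n})$ of $H^0(X,L^n)$, and the Kodaira map $\Phi_n\colon X\to\mathbb{P}^{k_n}$. A point $s_n\in\mathbb{P}H^0(X,L^n)$ is the same as a hyperplane $H_{s_n}\subset\mathbb{P}^{k_n}$ with $\mathbf{Z}_{s_n}=\Phi_n^{-1}(H_{s_n})$; write $u_{H_{s_n}}\le0$ for its standard quasi-potential, so $dd^c u_{H_{s_n}}+\omega_{FS}=[H_{s_n}]\ge0$, and $B_n:=\sum_j|S_j|^2_{h^n}$ for the Bergman function. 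From $[\mathbf{Z}_{s_n}]=\Phi_n^*[H_{s_n}]$ and $\Phi_n^*\omega_{FS}=n\omega+dd^c\log B_n$ (i.e.\ Poincaré--Lelong),
\begin{equation*}
\tfrac1n[\mathbf{Z}_{s_n}]-\omega=\tfrac1n\,dd^c\!\big(u_{H_{s_n}}\circ\Phi_n\big)+\tfrac1n\,dd^c\log B_n .
\end{equation*}
Against a fixed smooth $(k-1,k-1)$-form $\phi$, the Bergman term contributes $O(1/n)$ uniformly in $s_n$ (use $\int_X dd^c\phi=0$ and the Tian--Zelditch--Catlin expansion $\log B_n=k\log n+O(1/n)$ in $\mathscr{C}^2(X)$), while $|\langle dd^c(u_{H_{s_n}}\circ\Phi_n),\phi\rangle|\le\|dd^c\phi\|_\infty F_n(s_n)$ with $F_n(s_n):=\int_X|u_{H_{s_n}}(\Phi_n(x))|\,\omega^k(x)\ge0$. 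Hence it suffices to prove that $\tfrac1n F_n(s_n)\to0$ for $\sigma$-almost every $s=(s_n)$ --- one statement covering all $\phi$ simultaneously.

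By the Borel--Cantelli lemma applied to the product measure $\sigma$, it is enough to show $\sum_{n\ge1}\sigma_n(\{F_n>\delta n\})<\infty$ for every $\delta>0$, and then to intersect over $\delta=1/m$. The relevant observation is that, with $d:=c_1(L)^k=\int_X\omega^k$, the function $V_n:=-F_n/d$ is $\omega_{FS}$-plurisubharmonic and $\le0$ on $\mathbb{P}H^0(X,L^n)$: for fixed $x$ the map $s_n\mapsto u_{H_{s_n}}(\Phi_n(x))$ is $\omega_{FS}$-p.s.h.\ (by the symmetry of the hyperplane potential in its two variables) and $\le0$, and $V_n=\tfrac1d\int_X u_{H_{s_n}}(\Phi_n(x))\,\omega^k(x)$ is their average against the probability measure $\omega^k/d$. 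Moreover $\sup V_n$ and $\int V_n\,\omega_{FS}^{k_n}=-(1+\tfrac12+\dots+\tfrac1{k_n})$ (the classical Fubini--Study integral) are both $O(\log n)$, since $k_n$ is comparable to $n^k$.

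For the large deviation bound I would invoke the moderateness of $\sigma_n$ provided by Sections~2--3: under (i)--(iii), with $c=c(X,L,\rho)$ chosen large enough, $\sigma_n=\bigwedge_{j=1}^{k_n}(dd^c u_{n,j}+\omega_{FS})$ is $(\alpha,A)$-moderate with $\alpha=\alpha(\rho)>0$ and $A$ an absolute constant --- the point of the super-exponential decay $\xi_n,\epsilon_n\le c^{-n^k}$ being exactly that it prevents the moderateness constant of this $k_n$-fold Bedford--Taylor product from deteriorating ($k_n$ comparable to $n^k$). Applying moderateness to $V_n-\sup V_n$ gives $\int e^{\beta F_n}d\sigma_n=\int e^{-\beta d V_n}d\sigma_n\le A\,n^C$ whenever $\beta d\le\alpha$, so Chernoff's inequality yields $\sigma_n(\{F_n>\delta n\})\le A\,n^C e^{-\beta\delta n}$, which is summable; this proves the theorem. (Alternatively one may avoid analysing $\sigma_n$ directly: condition (ii) and monotonicity of the mixed Monge--Ampère operator give $\sigma_n\ge(1-\epsilon_n)^{k_n}\omega_{FS}^{k_n}$, hence $\|\sigma_n-\omega_{FS}^{k_n}\|\le2k_n\epsilon_n\le2n^k c^{-n^k}$ by (iii), whence $\sigma_n(\{F_n>\delta n\})\le\omega_{FS}^{k_n}(\{F_n>\delta n\})+2n^k c^{-n^k}$ and one is reduced to the Fubini--Study case of Dinh--Sibony \cite{ds1}.)

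The genuinely delicate step is this uniform moderateness estimate: the exponential-integrability constants of the \emph{singular} probability measures $\sigma_n$ must remain bounded as $n\to\infty$ even though each $\sigma_n$ is a wedge product of $k_n\sim c_1(L)^kn^k/k!$ positive currents --- this is where hypotheses (i) and (iii) are really used, and it is the content to be drawn from Sections~2--3. Everything else --- the Poincaré--Lelong reduction, the estimate on the Bergman term, the plurisubharmonicity and mean bound for $V_n$, and the Chernoff/Borel--Cantelli packaging --- is routine.
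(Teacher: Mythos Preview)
Your overall scheme---Poincar\'e--Lelong plus Tian for the deterministic part, then a Chernoff/Borel--Cantelli argument applied to the $\omega_{FS}$-p.s.h.\ functional $V_n$---is exactly what underlies the paper's Theorems~3.4--3.5 specialized to this situation, so structurally you are on the same track. The gap is in the moderateness input. You assert that under (i)--(iii) the measure $\sigma_n$ is moderate with an exponent $\alpha=\alpha(\rho)>0$ \emph{independent of $n$}; but that is not what Sections~2--3 establish, and I do not see how to obtain it. Proposition~2.11 and Remark~2.12 only give that $\sigma_n$ is $\bigl(c_0k_n+c_5,\ \alpha_0(\rho/4)^{k_n}\bigr)$-moderate, with exponent $\alpha_0(\rho/4)^{k_n}\to0$ super-exponentially because $k_n\sim n^k$. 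Feeding this exponent into your Chernoff bound yields $\sigma_n(\{F_n>\delta n\})\lesssim k_n\exp\!\bigl(-\alpha_0(\rho/4)^{k_n}\delta n/d\bigr)$, which grows like $k_n$ rather than decaying, so the series diverges. The paper circumvents this by decomposing $\sigma_n\le\omega_{FS}^{k_n}+\mu_{2,n}$ and treating the two pieces separately: the Fubini--Study piece keeps the uniform exponent $\alpha_0$ (Proposition~2.2) and gives a genuinely decaying tail, while $\mu_{2,n}$ has the collapsing exponent but also the tiny \emph{constant} $c_5(\rho/4)^{k_n}$, so its contribution is summable by virtue of its small size alone. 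The decomposition---not any uniform moderateness of $\sigma_n$ itself---is the mechanism.

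Your parenthetical alternative, by contrast, is correct, and is in fact simpler than the paper's route. Writing $dd^cu_{n,j}+\omega_{FS}=(1-\epsilon_n)\omega_{FS}+(dd^cu_{n,j}+\epsilon_n\omega_{FS})$ with both summands closed positive and with bounded potentials (thanks to (i)), the multilinear Bedford--Taylor expansion indeed gives $\sigma_n\ge(1-\epsilon_n)^{k_n}\omega_{FS}^{k_n}$; since both sides are probability measures, $\|\sigma_n-\omega_{FS}^{k_n}\|\le2\bigl(1-(1-\epsilon_n)^{k_n}\bigr)\le2k_n\epsilon_n$, summable by~(iii). This reduces the tail estimate to the Fubini--Study case and uses only Proposition~2.2, bypassing Proposition~2.11 entirely. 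The paper's approach instead proves the finer moderateness of the perturbation $\mu_{2,n}$ (Proposition~2.11): more work, but it yields an estimate of independent interest; your alternative trades that for a shorter path to Theorem~1.1.
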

The following result gives a convergence speed for the equidistribution in Theorem 1.1.
\begin{theorem}
In the setting of Theorem 1.1, there exist subsets $E_{n}\subset\mathbb{P}H^{0}(X,L^{n})$ and a positive constant
$C$ depending only on $X, L$ such that for all $n$ sufficiently large, we have
\begin{equation*}
\sigma_{n}(E_{n})\leq\frac{C}{n^{2}} \quad \text{and} \quad
|\bigl<\frac{1}{n}[\bm{Z}_{\bm{s_{n}}}]-\omega, \psi\bigr>|\leq\frac{C\log n}{n}\|\psi\|_{\mathscr{C}^{2}},
\end{equation*}
for any point $s_{n}\in\mathbb{P}H^{0}(X,L^{n})\setminus E_{n}$ and any $(k-1,k-1)$-form $\psi$ of class $\mathscr{C}^{2}$.
\end{theorem}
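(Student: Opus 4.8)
The plan is to reduce the estimate, via the Poincar\'e--Lelong formula, to a single scalar inequality, split that inequality into a deterministic Bergman-kernel part and a random part, and control the random part using the moderateness of $\sigma_n$ supplied by Sections 2--3. Throughout put $V:=\int_X\omega^k=c_1(L)^k$, and let $C_1,C_2,\dots$ denote constants depending only on $X$ and $L$. Since $[\bm{Z}_{s_n}]$ is scale-invariant I normalize $\|s_n\|_{L^2}=1$; then $\frac1n[\bm{Z}_{s_n}]-\omega=\frac1n\,dd^c\log\|s_n\|_{h^n}$ by Poincar\'e--Lelong, and integrating by parts on the compact manifold $X$ gives, for a $(k-1,k-1)$-form $\psi$ of class $\mathscr{C}^2$,
\begin{equation*}
\Bigl|\bigl\langle\tfrac1n[\bm{Z}_{s_n}]-\omega,\,\psi\bigr\rangle\Bigr|=\frac1n\Bigl|\int_X\log\|s_n\|_{h^n}\,dd^c\psi\Bigr|\le\frac{C_1}{n}\,\|\psi\|_{\mathscr{C}^2}\int_X\bigl|\log\|s_n\|_{h^n}\bigr|\,\omega^k .
\end{equation*}
So it suffices to produce $E_n\subset\mathbb{P}H^0(X,L^n)$ with $\sigma_n(E_n)\le C/n^2$ and $\int_X|\log\|s_n\|_{h^n}|\,\omega^k\le C\log n$ for every $s_n\notin E_n$; such an $E_n$ is automatically uniform in $\psi$.

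Next I would split off the Bergman kernel. Choose an orthonormal basis $(S_0,\dots,S_{k_n})$ of $H^0(X,L^n)$, put $B_n:=\sum_j\|S_j\|^2_{h^n}$, write $s_n=\sum_j a_jS_j$ with $\sum_j|a_j|^2=1$, and let $a:=[a_0:\dots:a_{k_n}]\in\mathbb{P}H^0(X,L^n)$ and $e_n(x)$ be the unit evaluation covector at $x$, so that $\log\|s_n\|_{h^n}(x)=\tfrac12\log B_n(x)+\log|\langle a,e_n(x)\rangle|$ with $0<|\langle a,e_n(x)\rangle|\le1$. The Tian--Zelditch--Catlin expansion yields $c_1n^k\le B_n\le c_2n^k$ on $X$ for $n$ large, whence $0\le\tfrac12\log B_n\le C_2\log n$ and
\begin{equation*}
\int_X\bigl|\log\|s_n\|_{h^n}\bigr|\,\omega^k\ \le\ C_2 V\log n\ +\ G_n(s_n),\qquad G_n(s_n):=-\int_X\log|\langle a,e_n(x)\rangle|\,\omega^k(x)\ \ge\ 0 .
\end{equation*}
It now remains to bound the nonnegative quantity $G_n$ by $O(\log n)$ off a set of $\sigma_n$-measure $\le C/n^2$.

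For this I would use that, for each fixed $x$, the function $a\mapsto\log|\langle a,e_n(x)\rangle|$ is $\omega_{FS}$-p.s.h.\ on $\mathbb{P}H^0(X,L^n)$ with maximum $0$, together with the moderateness of $\sigma_n$ established in Sections 2--3: under hypotheses (i)--(iii) --- in particular the exponential smallness of $\xi_n$ and $\epsilon_n$ --- it holds with constants comparable to those of the Fubini--Study volume, i.e.\ there are $\beta>0$ and $A_n$ with $\log A_n\le C_3\log n$, both depending only on $X,L$, such that $\sup_{x\in X}\int_{\mathbb{P}H^0(X,L^n)}|\langle a,e_n(x)\rangle|^{-\beta}\,d\sigma_n(a)\le A_n$. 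Writing $d\nu:=\omega^k/V$ (a probability measure), Jensen's inequality for $t\mapsto e^t$ and Fubini's theorem give
\begin{equation*}
\int_{\mathbb{P}H^0(X,L^n)}\exp\!\Bigl(\tfrac{\beta}{V}\,G_n(a)\Bigr)d\sigma_n(a)\ \le\ \int_X\Bigl(\int_{\mathbb{P}H^0(X,L^n)}|\langle a,e_n(x)\rangle|^{-\beta}\,d\sigma_n(a)\Bigr)d\nu(x)\ \le\ A_n .
\end{equation*}
Setting $E_n:=\{G_n>\lambda_n\}$ with $\lambda_n:=\tfrac{V}{\beta}\bigl(\log A_n+2\log n\bigr)=O(\log n)$, Chebyshev's inequality gives $\sigma_n(E_n)\le A_n\,e^{-\beta\lambda_n/V}=n^{-2}$, and for $s_n\notin E_n$ the previous step yields $\int_X|\log\|s_n\|_{h^n}|\,\omega^k\le C\log n$; together with the reduction above this proves the theorem.

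The hard part is none of the above, which is essentially formal, but the moderateness bound just invoked. A priori the exponent $\beta$ might collapse and the constant $A_n$ blow up as $k_n=\dim\mathbb{P}H^0(X,L^n)\sim c_1(L)^kn^k/k!\to\infty$, and then one would recover only $\lambda_n$ polynomial in $n$ rather than $O(\log n)$. Taming this is exactly the purpose of hypothesis (iii): writing $\sigma_n=\omega_{FS}^{k_n}+(\text{corrections built from }dd^cu_{n,j})$ and estimating the corrections through the H\"older modulus $\xi_n$ and the $\epsilon_n\omega_{FS}$-positivity of the $u_{n,j}$, one needs the smallness $\xi_n,\epsilon_n\le c^{-n^k}$ to dominate the constants --- at worst exponential in $k_n$ --- that appear in the moderate estimates for measures with H\"older potentials, so that $\sigma_n$ inherits moderate constants comparable to those of $\omega_{FS}^{k_n}$. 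That comparison is the technical heart of the matter, carried out in Sections 2--3; granted it, the convergence speed $O(\log n/n)$ and the bound $\sigma_n(E_n)=O(n^{-2})$ come out as above.
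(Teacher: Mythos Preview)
Your argument is essentially correct and takes a genuinely different route from the paper. The paper does not go through Poincar\'e--Lelong and the Bergman kernel directly; instead it stays inside the Dinh--Sibony meromorphic-transform framework, splitting $\langle n^{-1}[\bm{Z}_{\bm{s_n}}]-\omega,\psi\rangle$ via the intermediaries $n^{-1}F_n^\star(\sigma_n)$ and $n^{-1}F_n^\star(\omega_{FS}^{k_n})$, and then bounding the three resulting pieces by Theorems~3.4--3.6 together with the quantities $R_n,S_n,\Delta_n$ and black-box inequalities from \cite{ds1} (specifically Lemma~4.2(c) and inequality~(4.4) there). The paper's exceptional set is defined as a union over $\|\psi\|_{\mathscr{C}^2}\le1$ and its measure is controlled by $\Delta_n(C_4\log n)$. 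Your approach is more elementary and self-contained: the single scalar $G_n$ replaces the whole $R_n,S_n,\Delta_n$ apparatus, and the Jensen--Fubini--Chebyshev step is transparent. Both proofs ultimately rest on the same moderate estimates from Section~2.

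One imprecision is worth flagging: Sections~2--3 do \emph{not} yield a single fixed $\beta>0$ with $\int|\langle a,e_n(x)\rangle|^{-\beta}\,d\sigma_n\le A_n$ and $\log A_n=O(\log n)$; the exponent in Remark~2.12 is $\alpha_0(\rho/4)^{k_n}\to0$, and plugging that into your Chebyshev step would make $\lambda_n$ blow up. What Section~2 actually gives is the splitting $\sigma_n\le\mu_{1,n}+\mu_{2,n}$ with $\mu_{1,n}=\omega_{FS}^{k_n}$ satisfying the fixed-exponent bound ($\beta=\alpha_0$, $A_n=c_0k_n$, Proposition~2.2) and $\mu_{2,n}$ having \emph{total mass} at most $c_5(\rho/4)^{k_n}\ll n^{-2}$ (Proposition~2.11 evaluated at $\phi\equiv0$). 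So run your Jensen--Chebyshev argument on $\mu_{1,n}$ alone to get $\mu_{1,n}(G_n>\lambda_n)\le n^{-2}$ with $\lambda_n=O(\log n)$, and bound $\mu_{2,n}(G_n>\lambda_n)$ trivially by its total mass. This is precisely the two-term splitting the paper uses when estimating $\Delta_n(C_4\log n)$ in its display~(22); your last paragraph has the right idea, but the phrase ``$\sigma_n$ inherits moderate constants comparable to those of $\omega_{FS}^{k_n}$'' slightly overstates what is actually proved.
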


The paper is organized as follows. In Section 2 we recall the notion of moderate measure
and give an estimate for moderate measures with H\"{o}lder continuous potential on $\mathbb{P}^{k}$.
In Section 3 we introduce some notions and theorems for the equidistribution of zeros. We then apply the results in Section 2
to prove the main theorem and give a very explicit example. We conclude Section 3 with the proof of Theorem 1.2.

\section{Estimate for moderate measures}
In this section, we give an estimate for moderate measures on $\mathbb{P}^{k}$.
Some preliminaries for definitions and properties are needed.

Let $(X, \omega)$ be a compact K\"{a}hler manifold of dimension $k$, $\omega^{k}$ its standard volume form.
We say that a function $\phi$ on $X$ is quasiplurisubharmonic (q.p.s.h.) if it is $c\omega$-p.s.h. for some constant $c>0$.
In fact, it is locally the difference of a p.s.h. function and a smooth one.
Consider a positive measure $\mu$ on $X$, $\mu$ is said to be PLB if all the q.p.s.h. functions are $\mu$-integrable.
When $\dim X=1$, $\mu$ is PLB if and only if it admits a local bounded potential \cite{ds2}.
Let
\begin{equation}
\mathcal{F}=\{\phi ~q.p.s.h. ~on~ X: dd^{c}\phi\geq -\omega, \max_{X}\phi =0\}.
\end{equation}
$\mathcal{F}$ is compact in $L^{p}(X)$ and bounded in $L^{1}(\mu)$ when $\mu$ is a PLB measure, see \cite{ds1}.
\begin{definition}
Let $\mu$ be a {\rm PLB} measure on $X$. We say that $\mu$ is $(c,\alpha)$-moderate for some constants $c>0, \alpha >0$ if
\begin{equation*}
\int_{X}\exp(-\alpha \phi)d\mu\leq c
\end{equation*}
for all $\phi\in\mathcal{F}$. The measure $\mu$ is called moderate if there exist constants $c>0, \alpha >0$ such that it is $(c,\alpha)$-moderate.
\end{definition}
For example, $\omega^{k}$ is moderate \cite{hl2}.
When $X=\mathbb{P}^{k}$, we recall the following proposition \cite[Corollary A.5]{ds1},
\begin{proposition}
There are constants $c_{0}>0$ and $\alpha_{0} >0$ independent of $k$ such that
\begin{equation*}
\int_{\mathbb{P}^{k}}\exp(-\alpha_{0}\phi)\omega^{k}_{FS}\leq c_{0}k, \quad \forall \phi\in\mathcal{F}.
\end{equation*}
\end{proposition}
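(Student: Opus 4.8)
The plan is to prove the estimate first on $\P^{1}$ by classical potential theory, and then to pass from $\P^{k}$ to $\P^{1}$ in a single step by disintegrating the measure $\omega_{FS}^{k}$ along the pencil of lines through a point at which $\phi$ attains its maximum. The factor $k$ in the conclusion is not the residue of an iteration: it enters exactly once, as the maximal mass gain of the fibre (conditional) measures of that disintegration.

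\emph{The case $k=1$.} Fix $\psi\in\mathcal{F}$ on $\P^{1}$ and put $\mu:=\ddc\psi+\omega_{FS}$, a positive measure of total mass $1$. Let $G(x,y)$ be the Green function of $(\P^{1},\omega_{FS})$, so that $\ddc_{x}G(x,y)=\delta_{y}-\omega_{FS}$ and $\int_{\P^{1}}G(x,y)\,\omega_{FS}(x)=0$; then $\psi(x)=\int_{\P^{1}}G(x,y)\,d\mu(y)+C_{\psi}$ for some constant $C_{\psi}$, and the normalisation $\max_{\P^{1}}\psi=0$, together with $\int_{\P^{1}}\int_{\P^{1}}G\,d\mu\,\omega_{FS}=0$, forces $-M_{0}\le C_{\psi}\le 0$, where $M_{0}:=\sup G<\infty$. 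Since $-G(x,y)\le-\gamma\log\dist(x,y)+\const$ for absolute constants $\gamma,\const>0$, this gives $-\psi(x)\le\const-\gamma\int_{\P^{1}}\log\dist(x,y)\,d\mu(y)$, and Jensen's inequality applied to the probability measure $\mu$ yields $\exp(-\alpha\psi(x))\le\const\cdot\int_{\P^{1}}\dist(x,y)^{-\alpha\gamma}\,d\mu(y)$. Integrating in $x$ and using Fubini, everything reduces to the elementary fact that $\int_{\P^{1}}\dist(x,y)^{-\beta}\,\omega_{FS}(x)$ is finite and independent of $y$ for $0<\beta<2$; choosing $\alpha_{0}$ with $\alpha_{0}\gamma<2$ produces absolute constants $c_{0},\alpha_{0}>0$ with $\int_{\P^{1}}\exp(-\alpha_{0}\psi)\,\omega_{FS}\le c_{0}$ for all $\psi\in\mathcal{F}$.

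\emph{Descent from $\P^{k}$ to $\P^{1}$.} Let $\phi\in\mathcal{F}$ on $\P^{k}$ and pick $x_{0}\in\P^{k}$ with $\phi(x_{0})=\max_{\P^{k}}\phi=0$ (such $x_{0}$ exists, $\phi$ being upper semicontinuous on the compact $\P^{k}$). Let $\pi:\P^{k}\setminus\{x_{0}\}\to\P^{k-1}$ be the linear projection from $x_{0}$; its fibre over $v$ is $\ell_{v}\setminus\{x_{0}\}$, where $\ell_{v}\cong\P^{1}$ is the line joining $x_{0}$ to $v$. Disintegrating $\omega_{FS}^{k}$ along $\pi$, write $\omega_{FS}^{k}=\int_{\P^{k-1}}\lambda_{v}\,d(\pi_{*}\omega_{FS}^{k})(v)$; by the $U(k)$-symmetry of the picture — equivalently by a direct computation in an affine chart centred at $x_{0}$ — the base measure $\pi_{*}\omega_{FS}^{k}$ is the Fubini–Study measure $\omega_{FS}^{k-1}$ of $\P^{k-1}$, and in such a chart the conditional measure comes out explicitly as $\lambda_{v}=k\bigl(\tfrac{s}{1+s}\bigr)^{k-1}\,\omega_{FS}|_{\ell_{v}}$, where $s$ is the natural squared‐radial parameter along $\ell_{v}$ vanishing at $x_{0}$; in particular $\lambda_{v}\le k\,\omega_{FS}|_{\ell_{v}}$ as measures on $\ell_{v}$. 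Since $\ell_{v}$ is a line, $\omega_{FS}|_{\ell_{v}}$ is the normalised Fubini–Study form of $\ell_{v}\cong\P^{1}$; and since $x_{0}\in\ell_{v}$, the restriction $\phi|_{\ell_{v}}$ is $\omega_{FS}|_{\ell_{v}}$-p.s.h. with maximum $\phi(x_{0})=0$, hence $\phi|_{\ell_{v}}\in\mathcal{F}$ (on $\P^{1}$) for $\omega_{FS}^{k-1}$-almost every $v$ — the exceptional $v$, those with $\phi\equiv-\infty$ on $\ell_{v}$, forming a null set because $\{\phi=-\infty\}$ is pluripolar. Therefore
\begin{align*}
\int_{\P^{k}}\exp(-\alpha_{0}\phi)\,\omega_{FS}^{k}
&=\int_{\P^{k-1}}\Bigl(\int_{\ell_{v}}\exp(-\alpha_{0}\phi)\,\lambda_{v}\Bigr)\,\omega_{FS}^{k-1}(v)\\
&\le k\int_{\P^{k-1}}\Bigl(\int_{\ell_{v}}\exp(-\alpha_{0}\phi)\,\omega_{FS}|_{\ell_{v}}\Bigr)\,\omega_{FS}^{k-1}(v)\ \le\ c_{0}\,k,
\end{align*}
the last step using the one-dimensional estimate on each $\ell_{v}$ and $\int_{\P^{k-1}}\omega_{FS}^{k-1}=1$.

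I expect the case $k=1$ to be routine potential theory. The real point — the main obstacle — is the computation of the conditional measures: identifying the disintegration of $\omega_{FS}^{k}$ under projection from a point and extracting the clean bound $\lambda_{v}\le k\,\omega_{FS}|_{\ell_{v}}$. This is the step that manufactures the linear dependence on $k$, and it is also where one must avoid compounding constants — which is exactly why the projection has to be taken from a \emph{maximum} point of $\phi$, so that $\phi|_{\ell_{v}}$ needs no recentring and the one-dimensional constant $c_{0}$ is paid only once.
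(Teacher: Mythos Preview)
The paper does not prove this proposition: it is quoted verbatim from \cite[Corollary A.5]{ds1} and no argument is given. So there is nothing in the present paper to compare your proof against.

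That said, your argument is correct and is in fact essentially the approach used in the cited reference. The one-dimensional case is classical potential theory, and your treatment via the Green function and Jensen's inequality is standard. For the reduction step, your key computation is right: in an affine chart centred at $x_{0}$, with $s=|t|^{2}$ the squared affine parameter along a line $\ell_{v}$, one finds the conditional density
\[
\lambda_{v}=k\Bigl(\frac{s}{1+s}\Bigr)^{k-1}\omega_{FS}|_{\ell_{v}}\le k\,\omega_{FS}|_{\ell_{v}},
\]
which is exactly the source of the linear factor $k$. Your observation that the projection must be taken from a \emph{maximum} point of $\phi$ --- so that $\phi|_{\ell_{v}}$ already lies in $\mathcal{F}$ on $\P^{1}$ with no recentring --- is the crucial idea, and it is the same idea that drives the original Dinh--Sibony proof. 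The only point worth tightening is the justification that $\phi|_{\ell_{v}}\not\equiv-\infty$ for almost every $v$: you appeal to pluripolarity, but the cleanest route is Fubini for the disintegration (the set $\{\phi=-\infty\}$ is $\omega_{FS}^{k}$-null, hence $\lambda_{v}$-null for a.e.\ $v$, and $\lambda_{v}$ is equivalent to $\omega_{FS}|_{\ell_{v}}$ away from $x_{0}$).
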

\begin{remark}
We have a general definition for locally moderate measure on a complex manifold $X$ of dimension $k$.
The measure $\mu$ is locally moderate if for any open set $U\subset X$, any compact subset $K\subset U$ and any compact family $\mathcal{G}$ of q.p.s.h. functions on $U$,
there are constants $\alpha >0, c>0$ such that
\begin{equation*}
\int_{K}\exp(-\alpha\phi)d\mu\leq c, \quad \forall\phi\in\mathcal{G}.
\end{equation*}
As a consequence, $\mathcal{G}$ is bounded in $L^{p}_{loc}(\mu)$, where $p\geq 1$.
\end{remark}
The following lemma gives an alternative definition of moderate measures \cite{dn}.
\begin{lemma}
A PLB measure $\mu$ is moderate if and only if there exist two constants $c'>0, \alpha'>0$ such that
\begin{equation*}
\mu\{z\in K: \phi(z)< -M\}\leq c'e^{-\alpha'M}
\end{equation*}
for any $M\geq 0$ and $\phi\in\mathcal{F}$.
\end{lemma}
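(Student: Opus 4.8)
The plan is to prove the equivalence by moving between the exponential‑integrability bound that defines moderateness and the tail estimate on sublevel sets, using Markov's inequality in one direction and the layer‑cake (Cavalieri) formula in the other.

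For the ``only if'' direction, suppose $\mu$ is $(c,\alpha)$‑moderate. Given $\phi\in\mathcal{F}$ and $M\geq 0$, the set $\{\phi<-M\}$ equals $\{\exp(-\alpha\phi)>e^{\alpha M}\}$, so Markov's inequality gives
\[
\mu\{z: \phi(z)<-M\}\leq e^{-\alpha M}\int_{X}\exp(-\alpha\phi)\,d\mu\leq c\,e^{-\alpha M},
\]
and one takes $c'=c$, $\alpha'=\alpha$. This direction is immediate and the constants are inherited verbatim.

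For the ``if'' direction, suppose $\mu\{z:\phi(z)<-M\}\leq c'e^{-\alpha'M}$ holds for all $\phi\in\mathcal{F}$ and all $M\geq 0$. Fix $\phi\in\mathcal{F}$; since $\max_{X}\phi=0$ we have $\phi\leq 0$, hence $\exp(-\alpha''\phi)\geq 1$ for every $\alpha''>0$. By the layer‑cake formula,
\[
\int_{X}\exp(-\alpha''\phi)\,d\mu=\mu(X)+\int_{1}^{\infty}\mu\{z:\exp(-\alpha''\phi(z))>t\}\,dt .
\]
For $t\geq 1$ one has $\{\exp(-\alpha''\phi)>t\}=\{\phi<-(\log t)/\alpha''\}$ with $M:=(\log t)/\alpha''\geq 0$, so the hypothesis yields $\mu\{\exp(-\alpha''\phi)>t\}\leq c'e^{-\alpha'(\log t)/\alpha''}=c'\,t^{-\alpha'/\alpha''}$. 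Choosing $\alpha'':=\alpha'/2$ makes this exponent $-2$, so $\int_{1}^{\infty}t^{-2}\,dt=1$ and
\[
\int_{X}\exp(-\alpha''\phi)\,d\mu\leq\mu(X)+c'.
\]
Since the constant function $0$ is q.p.s.h. and $\mu$ is PLB, $\mu(X)=\int_{X}1\,d\mu<\infty$, so $\mu$ is $(\mu(X)+c',\,\alpha'/2)$‑moderate.

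The only step that needs a moment's care is the choice $\alpha''<\alpha'$, which is precisely what makes $\int_{1}^{\infty}t^{-\alpha'/\alpha''}\,dt$ converge; any $\alpha''\in(0,\alpha')$ works, and the resulting constant depends only on $c'$, $\alpha'$ and $\mu(X)$. There is no genuine obstacle: the uniformity over $\phi\in\mathcal{F}$ is automatic because neither estimate's constants depend on $\phi$, and the normalization $\phi\leq 0$ built into $\mathcal{F}$ is exactly what lets us split off the harmless finite term $\mu(X)$.
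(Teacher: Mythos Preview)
Your proof is correct and follows the standard Markov/layer-cake argument. The paper does not actually prove this lemma; it merely cites \cite{dn} and records the resulting constants in Remark~2.5. Your forward direction yields $c'=c$, $\alpha'=\alpha$, matching Remark~2.5 exactly; your converse yields $\alpha=\alpha'/2$ (again matching) but $c=\mu(X)+c'$ rather than the paper's $c=2c'$. The two constants agree whenever $\mu(X)\leq c'$ (in particular for probability measures with $c'\geq 1$, which is the case used throughout the paper), so this discrepancy is harmless.
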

\begin{remark}
We can take $c'=c$, $\alpha'=\alpha$ when $c,\alpha$ are given
and take $c=2c'$, $\alpha =\alpha'/2$ when $c', \alpha'$ are given.
\end{remark}
Let $S$ be a positive closed current of bidegree $(p,p)$ on $X$, the trace measure is $\sigma_{S}=S\wedge\omega^{k-p}$
for a fixed Hermitian form $\omega$ on $X$. Here $X$ may not be compact.
$S$ is said to be locally moderate if its trace measure is locally moderate.
If $u$ is a continuous real-valued function and $uS$ defines a current on $X$ (for example, if supp $u\subset$ supp$S$),
then $dd^{c}(uS)$ is well defined.
We say that $u$ is $S$-p.s.h. if $dd^{c}(uS)$ is a positive current.
Dinh-Nguy\^{e}n-Sibony \cite[Theorem 1.1]{dns1} proved the following theorem. We improve their method quantitatively in this section.
\begin{theorem}
Let $S$ be a locally moderate positive closed $(p,p)$-current on a complex manifold $X$. If $u$ is a H\"{o}lder continuous $S$-p.s.h. function,
then $dd^{c}(uS)$ is locally moderate.
\end{theorem}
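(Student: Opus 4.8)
The strategy follows and quantifies the method of \cite{dns1}. Fix an open set $U\subset X$, a compact $K\subset U$ and a compact family $\mathcal{G}$ of q.p.s.h. functions on $U$. Covering $K$ by finitely many coordinate balls, shrinking $U$ and rescaling the reference form $\omega$, we may assume that $U$ lies in a chart, that $K\subset U'\Subset U$ for an intermediate open set $U'$, that $dd^c\phi\ge-\omega$ on $U$ and $\sup_{U'}\phi\le 0$ for every $\phi\in\mathcal{G}$, and that $u$ is bounded on $U'$. Set $T:=dd^c(uS)$; since $u$ is $S$-p.s.h., $T$ is a positive closed $(p+1,p+1)$-current, so its trace measure $\sigma_T:=T\wedge\omega^{k-p-1}$ is a genuine (nonnegative) measure. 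By the layer-cake formula and the definition of local moderateness recalled above (cf.\ Lemma 2.4 and Remark 2.6), it suffices to prove
\[
\sigma_T\bigl(\{\phi<-M\}\cap K\bigr)\le c\,e^{-\alpha M},\qquad M\ge 0,\ \phi\in\mathcal{G},
\]
with $c,\alpha>0$ uniform over $\mathcal{G}$; the constants $c_S,\alpha_S$ for which the analogous bound holds for the trace measure $\sigma_S$ are supplied by the hypothesis that $S$ is locally moderate.

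The point is a two-scale splitting of $u$. If $u$ is H\"older of exponent $\rho$ and modulus $\xi$, mollification in the chart produces, for each $\epsilon\in(0,1]$, a smooth $u_\epsilon$ near $K$ with $\|u-u_\epsilon\|_{L^\infty(U')}\le c_1\xi\epsilon^{\rho}$ and $-c_2\xi\epsilon^{\rho-2}\omega\le dd^c u_\epsilon\le c_2\xi\epsilon^{\rho-2}\omega$; write $u=u_\epsilon+v_\epsilon$. We shall take $\epsilon=e^{-\beta M}$ with a fixed $\beta\in(0,\alpha_S/(2-\rho))$. Choose $\chi\ge 0$ smooth with $\chi\equiv 1$ near $K$ and $\supp\chi\subset U'$, a decreasing cutoff $\theta:\mathbb{R}\to[0,1]$ with $\theta\equiv 1$ on $(-\infty,0]$ and $\theta\equiv 0$ on $[1,\infty)$, and smooth $\phi_j\downarrow\phi$ on $U$ with $dd^c\phi_j\ge-2\omega$. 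Put $\psi_j:=\theta((\phi_j+2M)/M)$, so that $0\le\psi_j\le\mathbf{1}_{\{\phi<-M\}}$ and $\lim_j\psi_j\ge\mathbf{1}_{\{\phi<-2M\}}$. By monotone convergence, $\sigma_T(\{\phi<-2M\}\cap K)\le\lim_j\int\chi\psi_j\,d\sigma_T$, and by linearity of $f\mapsto dd^c(fS)$,
\[
\int\chi\psi_j\,d\sigma_T=\int\chi\psi_j\,dd^c(u_\epsilon S)\wedge\omega^{k-p-1}+\int\chi\psi_j\,dd^c(v_\epsilon S)\wedge\omega^{k-p-1}=:A_j+B_j .
\]
Since $u_\epsilon$ is smooth and $S$ is closed, $dd^c(u_\epsilon S)=dd^c u_\epsilon\wedge S$, hence $|dd^c(u_\epsilon S)\wedge\omega^{k-p-1}|\le c_2\xi\epsilon^{\rho-2}\sigma_S$ and $A_j\le c_2\xi\epsilon^{\rho-2}\sigma_S(\{\phi<-M\}\cap\supp\chi)\le c_2\xi c_S\,e^{((2-\rho)\beta-\alpha_S)M}$, which decays exponentially by the choice of $\beta$. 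For $B_j$, integration by parts (legitimate because $\chi\psi_j$ is smooth with compact support, $v_\epsilon$ is continuous and $S$ is closed) gives $B_j=\int v_\epsilon\,dd^c(\chi\psi_j)\wedge S\wedge\omega^{k-p-1}$, whence $|B_j|\le c_1\xi\epsilon^{\rho}\,V_j$, where $V_j$ is the total variation of $dd^c(\chi\psi_j)\wedge S\wedge\omega^{k-p-1}$.

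It remains to bound $V_j$ polynomially in $M$ and uniformly in $j$. Expanding $dd^c(\chi\psi_j)$, besides terms controlled by $\|dd^c\chi\|$ and $\sigma_S(\{\phi<-M\})$ one meets $M^{-1}\chi\,\theta'\,dd^c\phi_j$, $M^{-2}\chi\,\theta''\,d\phi_j\wedge d^c\phi_j$ and a cross term $M^{-1}\theta'\,d\chi\wedge d^c\phi_j$, all supported in $\{-2M\le\phi_j\le-M\}$. For the first, write $|dd^c\phi_j|\le(dd^c\phi_j+2\omega)+2\omega$ and use
\[
\int\chi'(dd^c\phi_j+2\omega)\wedge S\wedge\omega^{k-p-1}=\int\phi_j\,dd^c\chi'\wedge S\wedge\omega^{k-p-1}+2\int\chi'\,\omega\wedge S\wedge\omega^{k-p-1},
\]
which is bounded uniformly in $j$ because the $\phi_j$ stay bounded in $L^1(\sigma_S)$ on compacts; the $M^{-1}$ factor makes this term $O(1)$. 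For the other two, estimate $\mathbf{1}_{\{-2M\le\phi_j\le-M\}}\,d\phi_j\wedge d^c\phi_j\le dw'\wedge d^c w'$ with $w':=\max(-2M,\phi_j)$, which is q.p.s.h.\ and takes values in an interval of length $O(M)$; a Chern--Levine--Nirenberg estimate, applied after shifting $w'$ to a nonnegative psh function of the chart, gives $\int\chi' dw'\wedge d^c w'\wedge S\wedge\omega^{k-p-1}\lesssim M^2$, so the second term is $O(1)$ and the cross term is $O(M)$ after a Cauchy--Schwarz in the differential variables. Hence $V_j\le C(1+M)^2$ uniformly in $j$, so $|B_j|\le C\xi(1+M)^2 e^{-\rho\beta M}$. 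Adding the bounds for $A_j$ and $B_j$, letting $j\to\infty$ and replacing $2M$ by $M$ gives $\sigma_T(\{\phi<-M\}\cap K)\le c\,e^{-\alpha M}$ with $\alpha$ of order $\rho\alpha_S/(2-\rho)$ and $c$ controlled by $\xi$, by the moderate constants of $S$, and by the geometry of $K\subset U$. Tracking these constants is precisely the announced quantitative refinement.

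The step I expect to be delicate is the last one: controlling the energy-type integrals $\int d\phi_j\wedge d^c\phi_j\wedge S\wedge\omega^{k-p-1}$ and $\int dd^c\phi_j\wedge S\wedge\omega^{k-p-1}$ uniformly in both the regularizing index $j$ and the level $M$, so that the resulting polynomial growth in $M$ is absorbed by the $e^{-\rho\beta M}=\|v_\epsilon\|_{L^\infty}$–sized gain. The H\"older hypothesis enters exactly to make the two competing mollification estimates, of sizes $\epsilon^{\rho}$ and $\epsilon^{\rho-2}$, compatible through the scaling $\epsilon=e^{-\beta M}$ with $\beta<\alpha_S/(2-\rho)$, while the $S$-plurisubharmonicity of $u$ is used only to guarantee that $\sigma_T$ is a positive measure, which is what licenses bounding $\sigma_T(\{\phi<-2M\}\cap K)$ by $\int\chi\psi_j\,d\sigma_T$.
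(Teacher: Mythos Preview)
Your proposal is essentially correct and shares the decisive idea with the paper (and with \cite{dns1}, whose Theorem~1.1 is simply quoted here as Theorem~2.6): mollify $u$ at a scale $\epsilon\sim e^{-\beta M}$ so that the smooth part contributes $\epsilon^{\rho-2}\sigma_S\{\phi<-M\}$ and the remainder contributes $\epsilon^{\rho}$ times a mass, then optimise in $\epsilon$. The paper does not give a separate proof of Theorem~2.6; its quantitative implementation is Lemma~2.10, and that is the right object of comparison.

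Where the two arguments diverge is precisely at the step you flag as delicate. The paper (following \cite{dns1}) avoids your energy integrals altogether by two devices. First, instead of smoothing $\phi$ and using a bump $\theta$, it works with the bounded q.p.s.h.\ difference $\psi_M:=\max(\phi,-M+1)-\max(\phi,-M)$, which already satisfies $0\le\psi_M\le 1$ and $\supp\psi_M\subset\{\phi<-M+1\}$; this removes the need for $d\phi_j\wedge d^c\phi_j$ terms. Second, and more importantly, it \emph{modifies $u$ near the boundary of the ball}: after subtracting a constant one replaces $u$ by $\max(u,\epsilon A\log|z|)$, so that on the annulus $B_1\setminus B_{1-4r}$ one has exactly $u=\epsilon A\log|z|$. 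The integration-by-parts formula of Lemma~2.9 then puts all derivatives of $\chi$ on that annulus, where $u$, $du$, $d^cu$ are explicitly known and of size $O(\epsilon)$; no Chern--Levine--Nirenberg estimate on $\int d\phi\wedge d^c\phi\wedge S$ is needed, and the uniformity in $\phi$ comes for free from the moderate bound on $\sigma_S$ alone.

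Your route trades this boundary trick for CLN-type control of $\int dw'\wedge d^cw'\wedge S\wedge\omega^{k-p-1}$ with $w'=\max(\phi_j,-2M)$, which does work and yields the polynomial loss $V_j\lesssim (1+M)^2$ you then absorb into $e^{-\rho\beta M}$. The paper's approach is cleaner bookkeeping-wise (no regularisation of $\phi$, no energy estimate, and the exponent comes out as $\alpha\rho/4$ rather than roughly $\alpha\rho/(2-\rho)$), while yours is arguably more robust in that it does not require the somewhat ad hoc replacement of $u$ by a radial function near the boundary. Both lead to the same conclusion.
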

\begin{corollary}
Let $u$ be a H\"{o}lder continuous p.s.h. function on $X$. Then the Monge-Amp$\grave e$re currents $(dd^{c}u)^{p}$ are locally moderate.
\end{corollary}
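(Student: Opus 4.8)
The plan is to obtain Corollary 2.7 as a direct consequence of Theorem 2.6, applied inductively on $p$, building up the Monge-Amp\`ere current one $dd^{c}$ at a time. Since $u$ is H\"older continuous it is in particular continuous and locally bounded, so by Bedford--Taylor theory the currents $(dd^{c}u)^{j}$ are well defined, positive and closed for all $j\geq 0$, and satisfy the inductive relation $(dd^{c}u)^{j}=dd^{c}\bigl(u\,(dd^{c}u)^{j-1}\bigr)$. Because $u$ is continuous and $(dd^{c}u)^{j-1}$ has locally finite mass, the product $u\,(dd^{c}u)^{j-1}$ is a well-defined current, so no hypothesis on $\supp u$ is needed here.

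First I would treat the base of the induction. Take $S_{0}$ to be the positive closed $(0,0)$-current given by the constant function $\mathbf{1}$; its trace measure is the fixed volume form $\omega^{k}$. The point is that $\omega^{k}$ is locally moderate: the statement is local, q.p.s.h.\ functions are locally differences of a p.s.h.\ function and a smooth one, and for p.s.h.\ functions the uniform bound $\int_{K}\exp(-\alpha\phi)\,\omega^{k}\leq c$ over a compact family is Skoda's integrability theorem (the local counterpart of Proposition 2.3). Hence $S_{0}$ is a locally moderate positive closed current.

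For the inductive step, assume $(dd^{c}u)^{j-1}$ is locally moderate; it is positive and closed by Bedford--Taylor. Since $u$ is p.s.h.\ and $(dd^{c}u)^{j-1}\geq 0$ is closed, $dd^{c}\bigl(u\,(dd^{c}u)^{j-1}\bigr)=(dd^{c}u)^{j}$ is a positive current, i.e.\ $u$ is $(dd^{c}u)^{j-1}$-p.s.h.; and $u$ is H\"older continuous by hypothesis. Theorem 2.6, applied with $S=(dd^{c}u)^{j-1}$, then gives that $(dd^{c}u)^{j}=dd^{c}(uS)$ is locally moderate. Iterating from $j=1$ to $j=p$ yields the corollary.

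The only ingredient beyond Theorem 2.6 is the base case, that the ambient volume form is locally moderate, which I expect to be the main (though standard) point: it is essentially Skoda's theorem, and the one thing to watch is the uniformity required by the ``compact family'' clause in the definition of a locally moderate measure. Everything else is the routine check that $u\,(dd^{c}u)^{j-1}$ is an admissible current and that applying $dd^{c}$ to it reproduces the next Monge-Amp\`ere current.
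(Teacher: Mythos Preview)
Your argument is correct and is precisely the intended derivation: the paper states Corollary 2.7 without proof, as an immediate consequence of Theorem 2.6 by induction on $p$, with the base case (that $\omega^{k}$ is moderate) already noted just before Remark 2.3 (with reference to H\"ormander rather than Skoda, but this is the same classical fact). There is nothing to add.
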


Denote by $S^{k}$ the unit sphere on $\mathbb{R}^{k+1}$, $B_{1}$ the unit ball in $\mathbb{C}^{k}$.
Let $\pi: S^{2k+1}\rightarrow\mathbb{P}^{k}$ be the natural projection map.
More precisely, set $z_{j}=x_{j}+iy_{j}, x_{j},y_{j}\in\mathbb{R}, 0\leq j\leq k$, when $\sum_{j=0}^{k}|z_{j}|^{2}=1$, we have
$\pi(x_{0},y_{0},...,x_{k},y_{k})=[z_{0},...,z_{k}]$. Let $U_{0}=\{[z_{0},...,z_{k}]\in\mathbb{P}^{k},z_{0}\not =0\}$. There is a natural isomorphism
\begin{equation}
\theta:U_{0}\rightarrow\mathbb{C}^{k} ,
[z_{0},...,z_{k}]\rightarrow(z_{1}/z_{0},...,z_{k}/z_{0})
\end{equation}
Let $K_{0}=\theta^{-1}(B_{1})$.
$K_{0}$ is a neighbourhood of $[1,0,...,0]$ in $\mathbb{P}^{k}$.
$\pi^{-1}(K_{0})=\{(x_{0},y_{0},...,x_{k},y_{k})\in S^{2k+1}, \sum_{j=1}^{k}|z_{j}|^{2}\leq |z_{0}|^{2}\}$.
Let $S_{0}=\{(x_{0},y_{0},...,x_{k},y_{k})\in S^{2k+1}, x_{0}>\frac{1}{\sqrt{2}}\}$. It's obvious that $S_{0}\subset\pi^{-1}(K_{0})$
and $\pi(S_{0})$ is a neighbourhood of $[1,0,...,0]$. By the homogeneity of $S^{2k+1}$(resp. $\mathbb{P}^{k}$),
there is a neighbourhood $S_{0}^{\prime}$ (resp. $\pi(S_{0}^{\prime})$) of any point $(x_{0},y_{0},...,x_{k},y_{k})$
(resp. $[z_{0},...,z_{k}]$) which is the image of $S_{0}$ (resp. $\pi(S_{0})$) by rotations (resp. unitary transformations).
We say that $S_{0}^{\prime}$ (resp. $\pi(S_{0}^{\prime})$) is similar to $S_{0}$ (resp. $\pi(S_{0})$).
Since $\mathbb{P}^{k}$ is compact,
there are finitely many such neighbourhoods $\pi(S_{0})$ that cover $\mathbb{P}^{k}$.
Denote by $M_{k}$ the minimum number of such neighbourhoods $\pi(S_{0})$ that cover $\mathbb{P}^{k}$.
We have the following lemma.
\begin{lemma}
Let $K_{0}$ be as above. For any point $z\in\mathbb{P}^{k}$, there exists a neighbourhood $K_{z}$ of $z$ which is similar to $K_{0}$.
Denote by $N_{k}$ the minimum number of such neighbourhoods $K_{0}$ that cover $\mathbb{P}^{k}$. Then $N_{k}=O(8^{k})$.
\end{lemma}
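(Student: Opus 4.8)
The plan is to recognise the sets ``similar to $K_0$'' as metric balls of a fixed radius for the Fubini--Study distance $d_{FS}$ on $\P^k$, and then estimate the resulting covering number by a volume/packing argument. Recall that $\P^k$ is homogeneous under $U(k+1)$, which acts transitively and preserves $d_{FS}$, normalised so that $\cos d_{FS}([z],[w])=|\langle z,w\rangle|/(\|z\|\,\|w\|)$, with $\langle\cdot,\cdot\rangle$ the standard Hermitian product. This gives the first assertion at once: for any $z\in\P^k$ pick $U\in U(k+1)$ with $U[1:0:\cdots:0]=z$ and set $K_z=UK_0$. Unwinding the definitions of $\theta$ and $\pi$ one checks that $K_0=\{[z]\in\P^k:|z_0|^2\ge\tfrac12\|z\|^2\}$, which is precisely the closed $d_{FS}$-ball of radius $\pi/4$ about $[1:0:\cdots:0]$ (and $\pi(S_0)$ is the corresponding open ball). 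Hence the sets similar to $K_0$ are exactly the $d_{FS}$-balls of radius $\pi/4$, and $N_k$ is the minimal number of such balls covering $\P^k$.

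Next I would reduce to a covering problem on the round sphere $S^{2k+1}$, being careful not to lose a factor in the radius. If $v_1,\dots,v_N\in S^{2k+1}$ are such that the spherical caps $\{x:\langle x,v_i\rangle_{\R}\ge 1/\sqrt2\}$ of angular radius $\pi/4$ cover $S^{2k+1}$, then the balls $B_{FS}([v_i],\pi/4)$ cover $\P^k$: any $[x]\in\P^k$ has a unit representative $x$ lying in some such cap, and then $|\langle x,v_i\rangle|\ge\langle x,v_i\rangle_{\R}\ge 1/\sqrt2$, i.e. $\cos d_{FS}([x],[v_i])\ge 1/\sqrt2$, so $d_{FS}([x],[v_i])\le\pi/4$. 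Thus $N_k$ is at most the minimal number of caps of angular radius $\pi/4$ needed to cover $S^{2k+1}$. (Passing instead through caps of radius $\pi/8$ would cost an extra halving and yield a much worse base; the comparison $|\langle x,v\rangle|\ge|\langle x,v\rangle_{\R}|$ is what keeps the radius at $\pi/4$.)

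The spherical covering number is controlled by packing: take a maximal subset of $S^{2k+1}$ with pairwise angular distance $>\pi/4$; by maximality the caps of radius $\pi/4$ about these points cover the sphere, while the caps of radius $\pi/8$ about them are pairwise disjoint, and all caps of a given radius have equal volume. This gives
\[
N_k \le \frac{\volume(S^{2k+1})}{\volume(\text{cap of angular radius }\pi/8)} = \frac{\int_0^{\pi}\sin^{2k}t\,dt}{\int_0^{\pi/8}\sin^{2k}t\,dt}.
\]
The numerator is $\le\pi$, and the elementary bound $\int_0^{\pi/8}\sin^{2k}t\,dt\ge\tfrac1{2k}\bigl(\sin\tfrac\pi8-\tfrac1{2k}\bigr)^{2k}\ge\tfrac{c}{k}\bigl(\sin\tfrac\pi8\bigr)^{2k}$ (for $k$ large) yields $N_k=O\bigl(k\,(\sin\tfrac\pi8)^{-2k}\bigr)$. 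Since $(\sin\tfrac\pi8)^{-2}=\tfrac{2}{1-\cos(\pi/4)}=4+2\sqrt2<8$, we get $N_k=O\bigl(k\,(4+2\sqrt2)^k\bigr)=O(8^k)$.

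The argument is essentially routine, so the only point that needs real attention is the constant bookkeeping in the last two paragraphs: a careless reduction to the sphere, or a careless lower bound for $\int_0^{\pi/8}\sin^{2k}t\,dt$, easily produces an exponential base well above $8$. One must keep the angular radius equal to $\pi/4$ (via the Hermitian-versus-real inner product comparison) and use a lower bound for the cap volume with the sharp exponential rate $(\sin\tfrac\pi8)^{2k}$, after which the leftover polynomial factor in $k$ is harmlessly absorbed into $O(8^k)$.
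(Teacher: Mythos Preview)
Your argument is correct and follows essentially the same route as the paper: lift to $S^{2k+1}$, use the standard packing argument (maximal $\pi/8$-separated set, hence $\pi/4$-caps cover and $\pi/8$-caps are disjoint), and bound $N_k$ by the volume ratio $\int_0^{\pi}\sin^{2k}t\,dt\big/\int_0^{\pi/8}\sin^{2k}t\,dt$. Your endgame is in fact cleaner than the paper's: the paper establishes the bound $8^{k+1}$ for this ratio by an induction on $k$, whereas your elementary lower bound $\int_0^{\pi/8}\sin^{2k}t\,dt\ge\frac{c}{k}(\sin\tfrac{\pi}{8})^{2k}$ together with $(\sin\tfrac{\pi}{8})^{-2}=4+2\sqrt2<8$ gives the same conclusion more transparently.
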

\begin{proof}
Since $\pi(S_{0})\subset K_{0}$, then $M_{k}\geq N_{k}$. So it remains to prove that $M_{k}=O(8^{k})$.
We endow $S^{2k+1}$ with the great-circle distance. $S_{0}$ can be regarded as an open ball with central point $[1,0,...,0]$ of radius $\frac{\pi}{4}$.
Denote $S_{0}$ by $B([1,0,...,0],\frac{\pi}{4})$.
Let $S_{1}=B([1,0,...,0],\frac{\pi}{8})=\{(x_{0},y_{0},...,x_{k},y_{k})\in S^{2k+1}, x_{0}>\frac{\sqrt{2+\sqrt{2}}}{2}\}$.
We first consider the open balls of radius $\frac{\pi}{8}$. All of them are similar to each other.
We put the maximal number of balls $B(z_{1},\frac{\pi}{8}),...,B(z_{m_{k}},\frac{\pi}{8})$ in $S^{2k+1}$
such that all of them are disjoint mutually. Then $S^{2k+1}=\bigcup_{j=1}^{j=m_{k}}B(z_{j},\frac{\pi}{4})$.
If there exists a point $w\in S^{2k+1}\setminus\bigcup_{j=1}^{j=m_{k}}B(z_{j},\frac{\pi}{4})$,
then the great-circle distance between $w$ and $z_{j}$ is larger than or equal to $\frac{\pi}{4}$ for all $1\leq j\leq m_{k}$.
Hence $B(w,\frac{\pi}{8})\subset S^{2k+1}\setminus\bigcup_{j=1}^{j=m_{k}}B(z_{j},\frac{\pi}{8})$, contradicts with the maximality.
Then $M_{k}\leq m_{k}\leq Vol(S^{2k+1})/Vol(S_{1})$, the last inequality is due to the mutual disjointedness.
It means that $N_{k}=O(Vol(S^{2k+1})/Vol(S_{1}))$.

We now use the spherical coordinate for $S^{2k+1}$.
Let $x_{0}=\cos\theta_{1}, y_{0}=\sin\theta_{1}\cos\theta_{2}$, $..., x_{k}=\sin\theta_{1}\sin\theta_{2}\cdots \sin\theta_{2k}\cos\theta_{2k+1},
y_{k}=\sin\theta_{1}\sin\theta_{2}\cdots \sin\theta_{2k}\sin\theta_{2k+1}$.
Then the volume element of $S^{2k+1}$ is $d_{S^{2k+1}}V=\sin^{2k}\theta_{1}\sin^{2k-1}\theta_{2}\cdots \sin\theta_{2k}d\theta_{1}d\theta_{2}\cdots d\theta_{2k+1}$.
\begin{equation*}
Vol(S^{2k+1})=\int_{0}^{\pi}\sin^{2k}\theta_{1}\,d\theta_{1} \int_{0}^{\pi}\sin^{2k-1}\theta_{2}\,d\theta_{2}\cdots
\int_{0}^{\pi}\sin\theta_{2k}\,d\theta_{2k} \int_{0}^{2\pi}\,d\theta_{2k+1}
\end{equation*}
\begin{equation*}
Vol(S_{1})=\int_{0}^{\frac{\pi}{8}}\sin^{2k}\theta_{1}\,d\theta_{1} \int_{0}^{\pi}\sin^{2k-1}\theta_{2}\,d\theta_{2}\cdots
\int_{0}^{\pi}\sin\theta_{2k}\,d\theta_{2k} \int_{0}^{2\pi}\,d\theta_{2k+1} .
\end{equation*}
This yields $O(Vol(S^{2k+1})/Vol(S_{1}))=O(\int_{0}^{\pi}\sin^{2k}\theta_{1}\,d\theta_{1}/\int_{0}^{\frac{\pi}{8}}\sin^{2k}\theta_{1}\,d\theta_{1})$.

Then it suffices to show that
$\int_{0}^{\pi}\sin^{2k}\theta_{1}\,d\theta_{1}/\int_{0}^{\frac{\pi}{8}}\sin^{2k}\theta_{1}\,d\theta_{1}\leq 8^{k+1},\forall k\geq 7$.
When $k=7$, the inequality is right.
By induction on $k$ and the following integrals
\begin{equation*}
\int\sin^{2k}\theta_{1}\,d\theta_{1}=-\frac{\sin^{2k-1}\theta_{1}\cos\theta_{1}}{2k}+\frac{2k-1}{2k}\int\sin^{2k-2}\theta_{1}\,d\theta_{1},
\end{equation*}
the proof is reduced to show that $\int_{0}^{\frac{\pi}{8}}\sin^{2k}\theta_{1}\,d\theta_{1}\geq\frac{8}{7}\frac{1}{2k+1}\frac{\sqrt{2}}{4}(\frac{2-\sqrt{2}}{4})^{k}$.
By the relation between $\int_{0}^{\frac{\pi}{8}}\sin^{2k}\theta_{1}\,d\theta_{1}$ and $\int_{0}^{\frac{\pi}{8}}\sin^{2k+6}\theta_{1}\,d\theta_{1}$,
we have
\begin{equation*}
\begin{split}
&\int_{0}^{\frac{\pi}{8}}\sin^{2k}\theta_{1}\,d\theta_{1} \\
&\geq\frac{1}{2k+1}
\frac{\sqrt{2}}{4}(\frac{2-\sqrt{2}}{4})^{k}(1+\frac{2-\sqrt{2}}{4}\frac{2k+2}{2k+3}+(\frac{2-\sqrt{2}}{4})^{2}\frac{(2k+2)(2k+4)}{(2k+3)(2k+5)}) \\
\end{split}
\end{equation*}
Then the proof is completed.
\end{proof}

The following lemma is needed \cite[Lemma 2.3]{dns1}.
\begin{lemma}
Let $T$ be a positive closed current of bidegree $(k-1,k-1)$ and $u$ a $T$-p.s.h. function on a neighbourhood $U$
of the unit ball $B_{1}$ in $\mathbb{C}^{k}$. Suppose that $u$ is smooth on $B_{1-r}\setminus B_{1-4r}$ for a fixed number $0<r<1/4$.
If $\phi$ is a q.p.s.h. function on $U$,
$\chi$ is a smooth function with compact support on $B_{1-r}, 0\leq\chi_{k}\leq 1$ and $\chi_{k}\equiv 1$ on $B_{1-2r}$.
Then
\begin{equation*}
\begin{split}
&\int_{B_{1}}\chi\phi dd^{c}(uT)=-\int_{B_{1-r}\setminus B_{1-3r}}dd^{c}\chi\wedge\phi uT \\
&-\int_{B_{1-r}\setminus B_{1-3r}}d\chi\wedge\phi d^{c}u\wedge T +\int_{B_{1-r}\setminus B_{1-3r}}d^{c}\chi\wedge\phi du\wedge T \\
&+\int_{B_{1-r}}\chi udd^{c}\phi\wedge T.\\
\end{split}
\end{equation*}
\end{lemma}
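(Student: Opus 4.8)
The plan is to establish the identity first when $u$ and $\phi$ are smooth — where it is a chain of integrations by parts — and then recover the general case by a two-step regularization. For the smooth case, suppose $u$ and $\phi$ are smooth on a neighbourhood of $\supp\chi$, and keep $T$ an arbitrary positive closed $(k-1,k-1)$-current. Since $T$ is closed, $\partial T=\overline\partial T=0$, hence $d^cT=0$, $dd^c(uT)=dd^cu\wedge T=d(d^cu\wedge T)$ and $d(d^c\phi\wedge T)=dd^c\phi\wedge T$. Because $\chi$ has compact support in $B_{1-r}$, every current occurring below is a compactly supported $(k,k)$-current on $B_1$, so $\int_{B_1}d(\,\cdot\,)=0$ may be used freely. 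Integrating $dd^c(uT)=d(d^cu\wedge T)$ by parts and expanding $d(\chi\phi)=\chi\,d\phi+\phi\,d\chi$ gives
\begin{equation*}
\int_{B_1}\chi\phi\,dd^c(uT)=-\int_{B_1}\chi\,d\phi\wedge d^cu\wedge T-\int_{B_1}d\chi\wedge\phi\,d^cu\wedge T .
\end{equation*}
For the first term I would use the pointwise identity $d\phi\wedge d^cu\wedge T=du\wedge d^c\phi\wedge T$, valid because $d\phi\wedge d^cu-du\wedge d^c\phi$ has vanishing $(1,1)$-part and is therefore annihilated by the $(k-1,k-1)$-current $T$; then move $d$ off $u$ via $du\wedge d^c\phi\wedge T=d(u\,d^c\phi\wedge T)-u\,dd^c\phi\wedge T$. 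Two further applications of Stokes — peeling $\chi$ off $d(u\,d^c\phi\wedge T)$, then moving $d^c$ off $\phi$ inside the resulting $\int d\chi\wedge u\,d^c\phi\wedge T$ while expanding $d^c(u\,d\chi)=d^cu\wedge d\chi-u\,dd^c\chi$ — rewrite the first term as $\int\chi u\,dd^c\phi\wedge T-\int d\chi\wedge\phi\,d^cu\wedge T-\int dd^c\chi\wedge\phi u\,T$. Adding this to the second term and splitting the coefficient $-2$ of $\int d\chi\wedge\phi\,d^cu\wedge T$ through the same bidegree cancellation, now in the form $-d\chi\wedge d^cu\wedge T=d^c\chi\wedge du\wedge T$, produces exactly the asserted formula; and since $\chi\equiv1$ on $B_{1-2r}$, the terms carrying $d\chi$ or $dd^c\chi$ are supported in $B_{1-r}\setminus B_{1-2r}$, so the stated domains $B_{1-r}\setminus B_{1-3r}$ are legitimate.

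For the general case I would regularize in two stages. Fix $\eta>0$ with $\supp\chi\subset B_{1-r-\eta}$ and let $u_\delta$ be a standard smoothing of $u$, defined and smooth on $B_{1-r-\eta}$ for $\delta$ small; then $u_\delta\to u$ uniformly on compact subsets of $B_1$ and, since $u$ is smooth on $B_{1-r}\setminus B_{1-4r}\supset\supp d\chi$, one has $u_\delta\to u$ in $\mathscr{C}^\infty$ on a neighbourhood of $\supp d\chi$. Pick a smooth quasi-p.s.h. regularization $\phi_\epsilon\downarrow\phi$. Applying Step 1 to $(u_\delta,\phi_\epsilon,T)$ and letting $\delta\to0$, the terms with $d\chi,d^c\chi,dd^c\chi$ converge because their coefficients converge in $\mathscr{C}^\infty$ on the annulus, while $\int\chi\phi_\epsilon\,dd^c(u_\delta T)\to\int\chi\phi_\epsilon\,dd^c(uT)$ because $dd^c(u_\delta T)\to dd^c(uT)$ weakly and $\chi\phi_\epsilon$ is a fixed smooth compactly supported function; this yields the identity for $(u,\phi_\epsilon,T)$. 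Letting then $\epsilon\to0$: on $\supp\chi$ the functions $\chi\phi_\epsilon$ decrease to $\chi\phi$ and are bounded above, so $\int\chi\phi_\epsilon\,dd^c(uT)\to\int\chi\phi\,dd^c(uT)$ by monotone convergence against the positive measure $dd^c(uT)$ — the only place the hypothesis that $u$ is $T$-p.s.h.\ is needed; the three terms with $d\chi,d^c\chi,dd^c\chi$ converge by monotone convergence against the signed measures $d\chi\wedge d^cu\wedge T$, $d^c\chi\wedge du\wedge T$, $dd^c\chi\wedge uT$, which have locally finite mass because $u$ is smooth and $T$ has locally finite mass there; and $\int\chi u\,dd^c\phi_\epsilon\wedge T\to\int\chi u\,dd^c\phi\wedge T$ by the Bedford--Taylor continuity theorem. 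This establishes the formula in full generality.

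The algebra of the smooth case is routine once the two bidegree cancellations are noticed. The main obstacle is the regularization: one must order the two approximations so that at every stage all the wedge products remain well defined and each limit is justified — in particular controlling $\int\chi\phi\,dd^c(uT)$ simultaneously under the mollification of $u$ (uniform convergence, weak convergence of $dd^c(u_\delta T)$) and under the decreasing approximation of $\phi$ (monotone convergence against a positive measure), and invoking Bedford--Taylor continuity for $dd^c\phi_\epsilon\wedge T$. It is precisely the smoothness of $u$ near $\partial B_{1-r}$, its $T$-plurisubharmonicity, and the local mass bound for $T$ that make all of these passages legitimate.
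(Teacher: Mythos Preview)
The paper does not prove this lemma; it simply quotes it from \cite[Lemma 2.3]{dns1}. Your approach — Stokes' theorem and the bidegree identity $d\phi\wedge d^cu\wedge T=du\wedge d^c\phi\wedge T$ in the smooth case, followed by a two-stage regularization (first $u_\delta\to u$ against a fixed smooth $\phi_\epsilon$, then $\phi_\epsilon\downarrow\phi$) — is the standard argument and is essentially what the cited reference does.

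One caveat worth tightening: your appeal to ``the Bedford--Taylor continuity theorem'' for the convergence $\int\chi u\,dd^c\phi_\epsilon\wedge T\to\int\chi u\,dd^c\phi\wedge T$ presupposes that $dd^c\phi\wedge T$ is a well-defined measure and that the masses of $dd^c\phi_\epsilon\wedge T$ stay locally bounded. For general q.p.s.h.\ $\phi$ and arbitrary positive closed $T$ this is not automatic; it needs $\phi$ locally bounded (or at least $\phi\in L^1_{\loc}(\|T\|)$ together with a Chern--Levine--Nirenberg bound). The lemma as stated in the paper is a bit informal on this point, but in every application here (Lemma 2.10) the role of $\phi$ is played by the bounded function $\psi_M=\phi_{M-1}-\phi_M$, so the issue does not arise. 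A cleaner way to close the argument, avoiding any extra hypothesis, is simply to observe that once you have shown the left-hand side and the three annulus terms converge as $\epsilon\to0$, the remaining term $\int\chi u\,dd^c\phi_\epsilon\wedge T$ converges by subtraction, and its limit \emph{defines} $\int\chi u\,dd^c\phi\wedge T$.
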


Let $\mathcal{F}$ be defined in (1) when $X=\mathbb{P}^{k}$ and $\theta$ defined in (2).
The following lemma is crucial for the main proposition in this section.
\begin{lemma}
Let $u$ be of class $\mathscr{C}^{\rho}$ with modulus $\epsilon$ on a neighbourhood $U$ of $B_{1}$ in $\mathbb{C}^{k}$
with $dd^{c}u\geq 0$ in the sense of currents,
$0<\rho<1$. Set $\omega:=\frac{1}{2}dd^{c}\log(1+\|z\|^{2})$. Let $\mathcal{F}_{0}=\{\phi\circ\theta^{-1}$ on $U: \phi\in\mathcal{F}\}$
and $T$ a positive closed $(k-1,k-1)$-current.
If $T\wedge\omega$ is $(c,\alpha)$-moderate on $U$, then
\begin{equation*}
\int_{B_{1}}\exp(-\frac{\alpha\rho}{4}\phi)dd^{c}(uT)
\leq ck\epsilon (c_{1}e^{\alpha}+\frac{c_{2}}{\alpha})
\end{equation*}
where $c_{1},c_{2}$ are positive constants independent of $k$, $\rho$ and $T$.
\end{lemma}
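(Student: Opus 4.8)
The plan is to strip off the weight $\exp(-\tfrac{\alpha\rho}{4}\phi)$ by a dyadic layer-cake decomposition over the sublevel sets of $\phi$, to bound the mass of the positive measure $\nu:=dd^c(uT)=dd^c u\wedge T$ (equality since $T$ is closed) on each sublevel set by coupling the integration-by-parts formula of the preceding lemma with the moderateness of $T\wedge\omega$, and to let the H\"older continuity of $u$ supply both the linear factor $\epsilon$ and the passage from the exponent $\alpha$ to $\alpha\rho/4$. I begin with routine reductions. As $\nu$ is unchanged when a constant is added to $u$ and the oscillation of $u$ near $\overline{B_1}$ is $O(\epsilon)$, I may assume $0\le u\le c_3\epsilon$ on a fixed neighbourhood of $\overline{B_1}$ inside $U$. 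Convolution preserves the class $\mathscr{C}^{\rho}$ with modulus $\le\epsilon$ and the inequality $dd^c u\ge 0$, and $dd^c u_\delta\wedge T\to\nu$ weakly; together with the monotone convergence $\max(\phi,-M)\downarrow\phi$ this reduces the problem to $u$ smooth and $\phi$ smooth with $-M\le\phi\le 0$, $dd^c\phi\ge-\omega$, provided the final bound is uniform in $M$.

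Put $\beta:=\alpha\rho/4$ and $g_j:=\max(\phi,-j)-\max(\phi,-(j+1))\in[0,1]$. Since $g_{j-1}\equiv 1$ on $\{\phi\le -j\}$ one has the elementary pointwise bound $e^{-\beta\phi}\le 1+(e^{\beta}-1)\sum_{j\ge 0}e^{\beta j}g_{j-1}$, so (equivalently, via the layer-cake formula) the whole statement reduces to
\[
\nu\bigl\{z\in B_1:\phi(z)<-M\bigr\}\ \le\ c\bigl(c_1e^{\alpha}+\tfrac{c_2}{\alpha}\bigr)\,k\,\epsilon\,e^{-\alpha\rho M/2}
\]
uniformly in $M$. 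Indeed, because $\tfrac{\alpha\rho}{2}-\beta=\beta$, the series $\sum_j e^{\beta j}e^{-\alpha\rho j/2}$ and the integral $\int_0^{\infty}e^{\beta M}e^{-\alpha\rho M/2}\,dM$ converge, so multiplying by $e^{\beta}-1\le\beta e^{\beta}$ and adding $\nu(B_1)$, which obeys the same bound, yields $ck\epsilon(c_1e^{\alpha}+c_2/\alpha)$ after relabelling the constants.

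For the sublevel bound I apply the integration-by-parts identity of the preceding lemma --- after the usual reduction making $u$ smooth on the transition annulus --- with a cutoff $\chi$ equal to $1$ near $B_1$ and supported in $U$, to the two q.p.s.h. functions $\max(\phi,-(j-1))$ and $\max(\phi,-j)$, and subtract; this exhibits $\int_{B_1}g_{j-1}\,dd^c u\wedge T$ as a sum of boundary terms and one interior term. The boundary terms carry the factor $g_{j-1}$, which vanishes off $\{\phi<-(j-1)\}$, a set of $\omega\wedge T$-mass at most $ce^{-\alpha(j-1)}$ by the sublevel-set characterization of moderateness (Lemma 2.4); together with $\|u\|_{\infty}\le c_3\epsilon$ and, for the pieces involving $du$, with one further integration by parts and the Cauchy--Schwarz inequality for the positive current $T$ (whose square root accounts for the exponent $\rho/2$), they are controlled by the moderate integrals $\int e^{-\alpha\phi}\,\omega\wedge T\le c$ and $\int(-\phi)\,\omega\wedge T\le c/\alpha$ (the latter from $-\alpha\phi\le e^{-\alpha\phi}$). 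The interior term is treated by writing $dd^c g_{j-1}=(dd^c\max(\phi,-(j-1))+\omega)-(dd^c\max(\phi,-j)+\omega)$, transferring $dd^c$ back onto the truncations of $\phi$, and invoking the same two moderate integrals; the factor $k$ enters only through Proposition 2.2.

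The principal obstacle is the appearance of $du$ on the transition annulus: since $u$ is merely H\"older, its regularizations have $C^1$-norm of order $\epsilon\delta^{\rho-1}\to\infty$, so one cannot naively pass to the smooth case while preserving the estimate. This is circumvented by one more integration by parts, so that only $u$ (not $du$) and a derivative of the q.p.s.h. datum survive, by Cauchy--Schwarz for $T$, and by absorbing the gradient-energy term $\int u\chi\,d\phi\wedge d^c\phi\wedge T$: setting $v:=e^{-\beta\phi/2}$ and using $v\,dd^c v=dv\wedge d^c v-\tfrac{\beta}{2}v^{2}\,dd^c\phi$, this term is dominated by a small multiple of itself plus quantities controlled by $dd^c\phi\ge-\omega$ and the moderate integrals above. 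Carrying out this bookkeeping uniformly in the truncation level $M$ is the heart of the argument, and is where the quantitative sharpening of Dinh--Nguy\^en--Sibony \cite{dns1} is effected; everything else is routine once $\int e^{-\alpha\phi}\,\omega\wedge T\le c$ and $\int(-\phi)\,\omega\wedge T\le c/\alpha$ are in hand.
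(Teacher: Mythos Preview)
Your overall architecture (layer-cake reduction to the sublevel estimate $\nu\{\phi<-M\}\le Ck\epsilon\,e^{-\alpha\rho M/2}$, then an appeal to the integration-by-parts identity of Lemma~2.9) matches the paper. The gap is in the interior term $\int_{B_{1-r}}\chi\,u\,dd^c\psi_M\wedge T$, and it is exactly the place where the H\"older exponent~$\rho$ has to enter. Your proposed treatment, ``write $dd^c g_{j-1}=(dd^c\phi_{j-1}+\omega)-(dd^c\phi_j+\omega)$ and transfer $dd^c$ back onto the truncations,'' yields only a bound of order $\epsilon k c/\alpha$ with \emph{no} decay in $M$: one gets $\bigl|\int\chi\,dd^c\phi_M\wedge T\bigr|=\bigl|\int\phi_M\,dd^c\chi\wedge T\bigr|\le k\|\chi\|_{\mathscr C^2}\int|\phi|\,d\sigma_T\le kc_3c/\alpha$, and there is no mechanism in your argument to localise the positive measures $(dd^c\phi_M+\omega)\wedge T$ to the thin set $\{\phi\le -(M-1)\}$. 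Feeding a non-decaying term into the layer-cake sum $\sum_j e^{\beta j}(\cdots)$ makes it diverge. Relatedly, your remark that the Cauchy--Schwarz ``square root accounts for the exponent $\rho/2$'' is off: Cauchy--Schwarz on the boundary $du$-terms gives $e^{-\alpha M/2}$ (better than needed, and independent of $\rho$), so the H\"older exponent is not being used anywhere in your scheme beyond the trivial oscillation bound $\|u\|_\infty\lesssim\epsilon$.

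The paper closes this gap by two devices you omit. First, it replaces $u$ by $\max(u,\epsilon A\log|z|)$ so that $u=\epsilon A\log|z|$ on the annulus $B_1\setminus B_{1-4r}$; the three boundary integrals then carry an explicit factor $\epsilon$ and decay $e^{-\alpha M}$. Second --- and this is the point you are missing --- for the interior term it introduces a regularisation $u_\delta$ at a scale \emph{depending on $M$}: using $\|u-u_\delta\|_\infty\le\epsilon\delta^{\rho}$ and $\|u_\delta\|_{\mathscr C^2}\le\epsilon\delta^{\rho-2}$, one splits
\[
\int\chi\,u\,dd^c\psi_M\wedge T=\int\chi\,u_\delta\,dd^c\psi_M\wedge T+\int\chi\,(u-u_\delta)\,dd^c\psi_M\wedge T,
\]
bounds the second piece by $2c_3kc\alpha^{-1}\epsilon\delta^{\rho}$, applies Lemma~2.9 again to the first piece to extract $\int\chi\,\psi_M\,dd^cu_\delta\wedge T\le ce^{\alpha}e^{-\alpha M}\epsilon\delta^{\rho-2}$, and finally chooses $\delta=e^{-\alpha M/2}$ so that both contributions become $\asymp\epsilon\,e^{-\alpha\rho M/2}$. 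This balancing is precisely where the exponent $\alpha\rho/4$ (rather than $\alpha/4$) originates; without it the argument does not close.
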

\begin{proof}
We modify the function $u$ on $U$.
Subtracting a constant, we assume that $u\leq -\epsilon/2$ on $B_{1}$.
Consider the function $v(z)=\max(u(z), \epsilon A\log|z|)$ for a constant $A>0$ large enough such that
$v$ coincides with $u$ near the origin and $v(z)=\epsilon A\log|z|$ near the boundary of $B_{1}$.
For example, $A=\frac{1}{2}\log\frac{1}{1-4r}$.
$A$ is independent of the choice of $u$.
Fix $0<r<1/16$, we are allowed to assume that $u=\epsilon A\log|z|$ on $B_{1}\setminus B_{1-4r}$.
For the smooth function $\chi$ defined in Lemma 2.9,
we can assume that $\|\chi\|_{\mathscr{C}^{2}}<c_{3}$ for some constant $c_{3}>1$ large enough independent of $k$,
since the terms in the definition of the norm $\|\bullet\|_{\mathscr{C}^{2}}$ are smooth on the compact subset $\bar B_{1-r}\setminus B_{1-2r}$.
Set $\sigma_{T}=T\wedge\omega, \sigma_{T^{'}}=dd^{c}(uT), \phi_{M}=\max(\phi ,-M), \psi_{M}=\phi_{M-1}-\phi_{M}$, for $\phi\in\mathcal{F}_{0}, M\geq 0$. To prove the lemma, we need to estimate the mass of $dd^{c}(uT)$ on $\{\phi<-M\}$.
Since $\supp\chi\subset B_{1-r}$, hence
\begin{equation*}
\sigma_{T^{'}}\{\phi <-M\}\leq\int\chi\psi_{M}dd^{c}(uT).
\end{equation*}
Since $T$ is $(c,\alpha)$-moderate, then
\begin{equation*}
\sigma_{T}\{z\in B_{1-r}, \phi(z)\leq -M+1\}\leq ce^{\alpha}e^{-\alpha M}.
\end{equation*}
By Lemma 2.9, we have
\begin{equation}
\begin{split}
&\int_{B_{1}}\chi\psi_{M}dd^{c}(uT)=-\int_{B_{1-r}\setminus B_{1-3r}}dd^{c}\chi\wedge\psi_{M}uT \\
&-\int_{B_{1-r}\setminus B_{1-3r}} d\chi\wedge\psi_{M}d^{c}u\wedge T+\int_{B_{1-r}\setminus B_{1-3r}}d^{c}\chi\wedge\psi_{M}du\wedge T \\
&+\int_{B_{1-r}}\chi udd^{c}\psi_{M}\wedge T \\
\end{split}
\end{equation}
We know that
$\omega=\frac{1}{2}dd^{c}\log (1+\|z\|^{2})
=i\sum_{j,l=1}^{k}(\frac{dz_{j}\wedge d\bar z_{l}}{1+\|z\|^{2}}-\frac{\bar z_{j}z_{l}dz_{j}\wedge d\bar z_{l}}{(1+\|z\|^{2})^{2}}).$
By simple computations, the eigenvalues of the corresponding Hermitian matrix of $\omega$
are $\frac{1}{(1+\|z\|^{2})^{2}}$ and $\frac{1}{1+\|z\|^{2}}$ ($k-1$ times).
On the other hand, the eigenvalues of the corresponding Hermitian matrix of $i\sum_{j,l=1}^{k}dz_{j}\wedge d\bar z_{l}$ are $k$ and $0$ ($k-1$ times).
So there exists a constant $m_{1}>0$ small enough such that $\omega -\frac{m_{1}}{k}i\sum_{j,l=1}^{k}dz_{j}\wedge d\bar z_{l}>0$ on $B_{1}$.
Hence $|dd^{c}\chi\wedge uT|\leq |uc_{3}i\sum_{j,l=1}^{k}dz_{j}\wedge d\bar z_{l}\wedge T|\leq\epsilon A|\log (1-3r)|c_{3}\frac{k}{m_{1}}\sigma_{T}$.
Observing that $0\leq\psi_{M}\leq 1$, supp$\psi_{M}\subset\{\phi <-M+1\}$,
we obtain
\begin{equation*}
\left |\int_{B_{1-r}\setminus B_{1-3r}}dd^{c}\chi\wedge\psi_{M}uT\right |\leq\epsilon A|\log (1-3r)|c_{3}\frac{k}{m_{1}}ce^{\alpha}e^{-\alpha M}.
\end{equation*}
Since we know $u$ explicitly on supp($d\chi$), we obtain
\begin{equation*}
\left|\int_{B_{1-r}\setminus B_{1-3r}}d\chi\wedge\psi_{M}d^{c}u\wedge T\right|\leq\frac{\epsilon A}{1-3r}c_{3}km_{2}ce^{\alpha}e^{-\alpha M},
\end{equation*}
\begin{equation*}
\left|\int_{B_{1-r}\setminus B_{1-3r}}d^{c}\chi\wedge\psi_{M}du\wedge T\right|\leq\frac{\epsilon A}{1-3r}c_{3}km_{2}ce^{\alpha}e^{-\alpha M}.
\end{equation*}
for a constant $m_{2}>0$ large enough independent of $k$.
The sum of the first three terms is less than
\begin{equation}
c_{4}\epsilon kce^{\alpha}e^{-\alpha M}
\end{equation}
where $c_{4}=Ac_{3}(\frac{|\log (1-3r)|}{m_{1}}+\frac{2m_{2}}{1-3r})$ is independent of $k$ and $\rho$.

For the last integral in (3), we use a regularization procedure and the condition of
$\rho$-H\"{o}lder continuity of $u$.
Let $\{u_{\delta}\}$ be the smooth approximation of $u$ obtained by convolution.
For some fixed $0<\delta<1$ small enough, $u_{\delta}$ is defined in a neighborhood of $\bar B_{1-r}$.
There exists a suitable function $u_{\delta}$ satisfying that
$\|u_{\delta}\|_{\mathscr{C}^{2}}\leq\epsilon\delta^{-(2-\rho)}$ and
$\|u-u_{\delta}\|_{\infty}\leq\epsilon\delta^{\rho}$,
where the latter inequality follows from that $u$ is of class $\mathscr{C}^{\rho}$ with modulus $\epsilon$.
The above two inequalities are independent of $k$.
We write
\begin{equation*}
\begin{split}
&\int_{B_{1}}\chi udd^{c}\psi_{M}\wedge T \\
&=\int\chi dd^{c}\psi_{M}\wedge Tu_{\delta}+\int\chi(dd^{c}\phi_{M-1}-dd^{c}\phi_{M})\wedge T(u-u_{\delta}). \\
\end{split}
\end{equation*}
Since
\begin{equation*}
\left|\int\chi dd^{c}(\phi T)\right|=\left|\int dd^{c}\chi\wedge\phi T\right|\leq k\|\chi\|_{\mathscr{C}^{2}}\int_{B_{1-r}}|\phi|d\sigma_{T},
\end{equation*}
We obtain
\begin{equation*}
\begin{split}
&\left|\int\chi(dd^{c}\phi_{M-1}-dd^{c}\phi_{M})\wedge T\right|
\leq 2k\|\chi\|_{\mathscr{C}^{2}}\int_{B_{1-r}}|\phi|d\sigma_{T} \\
&\leq 2k\|\chi\|_{\mathscr{C}^{2}}\frac{1}{\alpha}\int_{B_{1-r}}\exp(-\alpha\phi)d\sigma_{T}\leq 2c_{3}k\frac{c}{\alpha}. \\
\end{split}
\end{equation*}
Then
\begin{equation}
\left|\int\chi (dd^{c}\phi_{M-1}-dd^{c}\phi_{M})\wedge T(u-u_{\delta})\right|\leq 2c_{3}k\frac{c}{\alpha}\epsilon\delta^{\rho}
\end{equation}
Using Lemma 2.9 again, we obtain
\begin{equation*}
\begin{split}
&\int\chi dd^{c}\psi_{M}\wedge Tu_{\delta} \\
&=\int_{B_{1-r}\setminus B_{1-3r}}dd^{c}\chi\wedge\psi_{M}Tu_{\delta}+\int_{B_{1-r}\setminus B_{1-3r}}d\chi\wedge\psi_{M}T\wedge d^{c}u_{\delta} \\
&-\int_{B_{1-r}\setminus B_{1-3r}}d^{c}\chi\wedge\psi_{M}T\wedge du_{\delta}+\int_{B_{1-r}}\chi\psi_{M}T\wedge dd^{c}u_{\delta}. \\
\end{split}
\end{equation*}
By the same argument, the first three integrals have the same dominant constant
\begin{equation}
c_{4}\epsilon kce^{\alpha}e^{-\alpha M}.
\end{equation}
The final term
\begin{equation}
\begin{split}
&\left|\int\chi\psi_{M}T\wedge dd^{c}u_{\delta}\right|\leq ce^{\alpha}e^{-\alpha M}\|u_{\delta}\|_{\mathscr{C}^{2}} \\
&\leq ce^{\alpha}e^{-\alpha M}\epsilon\delta^{-(2-\rho)}. \\
\end{split}
\end{equation}
Let $\delta=e^{-\alpha M/2}$ small enough, since it is sufficient to consider $M$ big.
Then $e^{-\alpha M\rho/2}= e^{-\alpha M}e^{\alpha M(2-\rho)/2}$.
Combining $(4),(5),(6),(7)$,
we have
\begin{equation*}
\sigma_{T^{'}}\{z\in B_{1}, \phi <-M\}
\leq \epsilon ck(2c_{4}e^{\alpha}+\frac{e^{\alpha}}{k}+2\frac{c_{3}}{\alpha})e^{-\frac{\alpha M}{2}\rho}.
\end{equation*}
So by Remark 2.5 we have
\begin{equation*}
\begin{split}
&\int_{B_{1}}\exp(-\frac{\alpha\rho}{4}\phi)dd^{c}(uT) \\
&\leq 2\epsilon ck(2c_{4}e^{\alpha}+\frac{e^{\alpha}}{k}+2\frac{c_{3}}{\alpha})
\leq \epsilon ck(c_{1}e^{\alpha}+\frac{c_{2}}{\alpha}), \\
\end{split}
\end{equation*}
where $c_{1}=4c_{4}+2, c_{2}=4c_{3}$.
\end{proof}

The following proposition is our main result about the estimate for moderate measures on $\mathbb{P}^{k}$.
\begin{proposition}
Suppose that $u_{j}$ is of class $\mathscr{C}^{\rho}$ with modulus $\epsilon$ on $\mathbb{P}^{k}$ for some $0<\rho<1, 0<\epsilon<1$,
and that $u_{j}$ is an $\epsilon\omega_{FS}$-p.s.h. function for all $1\leq j\leq k$.
Assume that $\epsilon<\beta_{0}k^{-3}(\frac{\rho}{12})^{2k}$,
where $\beta_{0}$ is a positive constant independent of $k$ and $\rho$.
Then there exists a positive constant $c_{5}$ independent of $k$ and $\rho$, such that
\begin{equation}
\int_{\mathbb{P}^{k}}\exp(-\alpha_{0}(\frac{\rho}{4})^{k}\phi)
(\wedge_{j=1}^{j=k}(dd^{c}u_{j}+\epsilon\omega_{FS}+\omega_{FS})-\omega_{FS}^{k})\leq c_{5}(\frac{\rho}{4})^{k}
\end{equation}
for all $\phi\in\mathcal{F}$, where $\alpha_{0}$ is the constant in Proposition 2.2.
In other words, $(\wedge_{j=1}^{j=k}(dd^{c}u_{j}+\epsilon\omega_{FS}+\omega_{FS})-\omega_{FS}^{k})$ is
$(c_{5}(\frac{\rho}{4})^{k}, \alpha_{0}(\frac{\rho}{4})^{k})$-moderate.
\end{proposition}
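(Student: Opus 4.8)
The plan is to split the signed measure in (8) into $k$ positive pieces by a telescoping identity, to bound each piece by a single application of Lemma 2.10, and to close the estimate by an induction on the index in which the moderateness constants feeding Lemma 2.10 at one step are those produced at the previous step.

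Set $A_j:=dd^cu_j+\epsilon\omega_{FS}+\omega_{FS}$ and $S_i:=A_1\wedge\cdots\wedge A_i\wedge\omega_{FS}^{k-i}$ for $0\le i\le k$, so that $S_0=\omega_{FS}^k$ and $S_k$ is the measure to be compared with $\omega_{FS}^k$. Each $A_j$ is a positive closed $(1,1)$-current, and $A_j-\omega_{FS}=dd^cu_j+\epsilon\omega_{FS}\ge 0$ since $u_j$ is $\epsilon\omega_{FS}$-p.s.h.; hence
\[
S_k-S_0=\sum_{i=1}^k R_i,\qquad R_i:=A_1\wedge\cdots\wedge A_{i-1}\wedge(dd^cu_i+\epsilon\omega_{FS})\wedge\omega_{FS}^{k-i}
\]
is a sum of $k$ positive closed $(k,k)$-currents. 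With $T_i:=A_1\wedge\cdots\wedge A_{i-1}\wedge\omega_{FS}^{k-i}$, a positive closed $(k-1,k-1)$-current, one has $R_i=(dd^cu_i+\epsilon\omega_{FS})\wedge T_i$ and $T_i\wedge\omega_{FS}=S_{i-1}$. In a chart similar to $K_0$ the pullback of $\omega_{FS}$ equals $\omega=\tfrac12dd^c\log(1+\|z\|^2)$ (the Fubini--Study form is unitarily invariant), so there $R_i=dd^c(u_i'T_i)$ with $u_i':=u_i+\tfrac\epsilon2\log(1+\|z\|^2)$. Since the chart maps are bi-Lipschitz onto their images with $k$-independent constants (the matrix of $\omega$ has eigenvalues between $\tfrac14$ and $1$ on $B_1$) and $\log(1+\|z\|^2)$ is smooth on $B_1$ with $k$-independent $\mathscr{C}^1$-norm, $u_i'$ is of class $\mathscr{C}^\rho$ with modulus $\le C_2\epsilon$ in the chart coordinates for an absolute constant $C_2$, and $dd^cu_i'=dd^cu_i+\epsilon\omega\ge 0$.

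Next I would prove by induction that $S_i$ is $\bigl(c^{(i)},\alpha_0(\rho/4)^i\bigr)$-moderate, where $c^{(0)}=c_0k$ (Proposition 2.2) and $c^{(i)}=c^{(i-1)}\bigl(1+N_k\,k\,C_2C_6\,\epsilon\,(\rho/4)^{-(i-1)}\bigr)$ for the constant $C_6:=c_1e^{\alpha_0}+c_2/\alpha_0$ independent of $k$ and $\rho$. Granting the statement for $i-1$, cover $\mathbb{P}^k$ by $N_k=O(8^k)$ charts similar to $K_0$ (Lemma 2.6). On each such chart $T_i\wedge\omega=S_{i-1}$ is $\bigl(c^{(i-1)},\alpha_0(\rho/4)^{i-1}\bigr)$-moderate against the functions of $\mathcal{F}_0$ — which is all Lemma 2.10 uses, and which follows from the global moderateness of $S_{i-1}$ on $\mathbb{P}^k$, since the functions of $\mathcal{F}_0$ are precisely the restrictions of global functions of $\mathcal{F}$. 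Lemma 2.10 with $u=u_i'$, $T=T_i$ then bounds $\int\exp(-\alpha_0(\rho/4)^i\phi)\,R_i$ on that chart by $c^{(i-1)}k(C_2\epsilon)\bigl(c_1e^{\alpha_0}+c_2(\rho/4)^{-(i-1)}/\alpha_0\bigr)\le c^{(i-1)}k\,C_2C_6\,\epsilon\,(\rho/4)^{-(i-1)}$, the new exponent $\alpha_0(\rho/4)^i$ being $\tfrac\rho4$ times the input exponent. Summing over the $N_k$ charts and adding $\int\exp(-\alpha_0(\rho/4)^i\phi)\,S_{i-1}\le c^{(i-1)}$ (valid since $(\rho/4)^i<(\rho/4)^{i-1}$ and $\phi\le 0$) yields that $S_i$ is $\bigl(c^{(i)},\alpha_0(\rho/4)^i\bigr)$-moderate, closing the induction.

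Finally, each $R_i$ is moderate at the rate $\alpha_0(\rho/4)^i\ge\alpha_0(\rho/4)^k$, so summing the $k$ chartwise bounds at the common rate $\alpha_0(\rho/4)^k$ gives
\[
\int_{\mathbb{P}^k}\exp\!\bigl(-\alpha_0(\tfrac\rho4)^k\phi\bigr)(S_k-S_0)\;\le\; k\cdot N_k\cdot c^{(k-1)}\cdot k\,C_2C_6\,\epsilon\,(\rho/4)^{-(k-1)} .
\]
The hypothesis $\epsilon<\beta_0k^{-3}(\rho/12)^{2k}$ is chosen precisely so that, for $\beta_0$ a small enough constant independent of $k$ and $\rho$: first, each factor defining $c^{(i)}$ is $\le 1+1/k$, so $c^{(k-1)}\le c_0k\,(1+1/k)^{k-1}\le ec_0k$; and second, the displayed right-hand side is $\le c_5(\rho/4)^k$, using $N_k\le C_48^k$ and $8^k(\rho/12)^{2k}\le(\rho/4)^{2k}$ (equivalently $128^k\le 144^k$) to absorb the $8^k$, the $k^3$, and the $(\rho/4)^{-k}$ into the smallness of $\epsilon$. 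This is the asserted estimate (8), and the moderateness reformulation follows by Lemma 2.4. The main obstacle is exactly this bookkeeping: one must verify that the covering number $O(8^k)$ of Lemma 2.6, the geometric decay of the moderateness exponent from $\alpha_0$ down to $\alpha_0(\rho/4)^k$, and the polynomial-in-$k$ loss at each of the $k$ invocations of Lemma 2.10 are jointly dominated by the assumed smallness of $\epsilon$; everything else is a routine combination of the telescoping identity with Lemmas 2.6 and 2.10.
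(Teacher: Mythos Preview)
Your argument is correct, and it reaches the same conclusion via a genuinely different route from the paper's proof.

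The paper expands $\bigwedge_j(dd^cu_j+\omega)-\omega^k$ \emph{binomially} into the $2^k-1$ monomials $dd^cu_{l_1}\wedge\cdots\wedge dd^cu_{l_j}\wedge\omega^{k-j}$, bounds each such monomial by $j$ successive applications of Lemma~2.10 carried out \emph{locally} on $B_1$ (obtaining the product $c_0k(\epsilon k)^j\prod_{l=0}^{j-1}\bigl(c_1e^{\alpha_0(\rho/4)^l}+c_2/(\alpha_0(\rho/4)^l)\bigr)$), sums with the binomial weights, and only at the very end passes to $\mathbb{P}^k$ via the covering of Lemma~2.8. Your proof instead uses a \emph{telescoping} $S_k-S_0=\sum_{i=1}^k R_i$ into only $k$ positive pieces and runs the induction \emph{globally}: from the global moderateness of $S_{i-1}$ one extracts the chartwise hypothesis needed for Lemma~2.10, applies it once to bound $R_i$ on each chart, sums over the $N_k$ charts, and adds back $S_{i-1}$ to close the induction for $S_i$. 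This global iteration cleanly sidesteps the nested-balls issue implicit in iterating Lemma~2.10 purely on $B_1$ (each invocation consumes a shell $B_{1-r}\setminus B_{1-3r}$, so iterating locally requires shrinking the ball at every step). The trade-off is that your recursion picks up the factor $N_k=O(8^k)$ at every one of the $k$ steps rather than once at the end, but since each factor is of the form $1+O(1/k)$ under the hypothesis on $\epsilon$, the cumulative loss is bounded by $e$, and your arithmetic check $8^k(\rho/12)^{2k}\le(\rho/4)^{2k}$ (i.e.\ $128\le 144$) shows the final bound still lands at $c_5(\rho/4)^k$. Both approaches rest on the same three ingredients (Lemma~2.10, the covering Lemma~2.8, and the smallness of $\epsilon$); yours packages the bookkeeping more economically.
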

\begin{proof}
We pull back the integral (8) locally to that on $\mathbb{C}^{k}$.
There is a potential $v=\frac{\epsilon}{2}\log(1+\|z\|^{2})$ on $\mathbb{C}^{k}$ such that $(\theta^{-1})^{\star}(\epsilon\omega_{FS})=dd^{c}v$,
where the map $\theta$ is defined in (2).
Set $\tilde u_{j}:=u_{j}\circ\theta^{-1}+v$. Note that $u_{j}$ is $\epsilon\omega_{FS}$-p.s.h., then $dd^{c}\tilde u_{j}\geq 0$.
Since $u_{j}$ is of class $\mathscr{C}^{\rho}$ with modulus $\epsilon$ on $\mathbb{P}^{k}$,
$\log(1+\|z\|^{2})$ is of class $\mathscr{C}^{a}$ on $\mathbb{C}^{k}$ for all $0<a<1$,
then we may assume that $\tilde u_{j}$ is of class $\mathscr{C}^{\rho}$ with modulus $\epsilon$ on $B_{1}$.
Hence $\|\tilde u_{j}\|_{\mathscr{C}^{\rho}(B_{1})}\leq\epsilon$.
Let $\omega=\frac{1}{2}dd^{c}\log(1+\|z\|^{2})$, we have
\begin{equation}
\begin{split}
& \int_{K_{0}}\exp(-\alpha\phi)\wedge_{j=1}^{j=k}(dd^{c}u_{j}+\epsilon\omega_{FS}+\omega_{FS}) \\
& =\int_{B_{1}}\exp(-\alpha\phi\circ\theta^{-1})(\theta^{-1})^{\star}\wedge_{j=1}^{j=k}(dd^{c}u_{j}+\epsilon\omega_{FS}+\omega_{FS}) \\
& =\int_{B_{1}}\exp(-\alpha\phi\circ\theta^{-1})(dd^{c}\tilde u_{1}+\omega)\wedge\cdot\cdot\cdot\wedge(dd^{c}\tilde u_{k}+\omega) \\
\end{split}
\end{equation}
We replace $\tilde u_{j}$ (resp. $\phi\circ\theta^{-1}$) by $u_{j}$ (resp. $\phi$) in the sequel.
Since there are two constants $c_{0}>0, \alpha_{0}>0$ independent of $k$ and $\rho$, such that
\begin{equation}
\int_{\mathbb{P}^{k}}\exp(-\alpha_{0}\tilde\phi)\omega_{FS}^{k}\leq c_{0}k,
\end{equation}
by pulling back the integral in $B_{1}$ with Lemma 2.10, we have
\begin{equation*}
\int_{B_{1}}\exp(-\alpha_{0}\frac{\rho}{4}\phi)(dd^{c}u_{j})\wedge\omega^{k-1}
\leq c_{0}\epsilon k^{2}(c_{1}e^{\alpha_{0}}+\frac{c_{2}}{\alpha_{0}}).
\end{equation*}
By induction we can show that
\begin{equation*}
\int_{B_{1}}\exp(-\alpha_{0}(\frac{\rho}{4})^{j}\phi)dd^{c}u_{l_{1}}\wedge\cdot\cdot\cdot\wedge dd^{c}u_{l_{j}}\wedge\omega^{k-j}
\leq c_{0}k(\epsilon k)^{j}\prod_{l=0}^{j-1}(c_{1}e^{\alpha_{0}(\frac{\rho}{4})^{l}}+\frac{c_{2}}{\alpha_{0}(\frac{\rho}{4})^{l}})
\end{equation*}
for all $1\leq l_{1}<\cdot\cdot\cdot<l_{j}\leq k$.
Let $\beta_{0}=1/(c_{1}e^{\alpha_{0}}+\frac{c_{2}}{\alpha_{0}}), \epsilon_{0}=\beta_{0}k^{-3}(\frac{1}{8})^{k}(\frac{\rho}{4})^{\frac{3k-1}{2}}>\epsilon, \epsilon_{0} =\epsilon_{1}\epsilon_{2},\epsilon_{2}=(\frac{\rho}{4})^{k}$.
Here $\beta_{0}$ is independent of $k$ and $\rho$.
Let $\epsilon_{1}=\epsilon_{3}/(\frac{\rho}{4})^{\frac{k+1}{2}}$, then $\epsilon_{3}=\beta_{0}(\frac{\rho}{32})^{k}/k^{3}$. Hence
\begin{equation}
\begin{split}
&\int_{B_{1}}\exp(-\alpha_{0}(\frac{\rho}{4})^{k}\phi)((dd^{c}u_{1}+\omega)\wedge\cdot\cdot\cdot\wedge(dd^{c}u_{k}+\omega)-\omega^{k}) \\
&=\sum_{j=1}^{k}\binom{k}{j}\int_{B_{1}}\exp(-\alpha_{0}(\frac{\rho}{4})^{j}\phi)dd^{c}u_{1}\wedge\cdot\cdot\cdot\wedge dd^{c}u_{j}\wedge\omega^{k-j} \\
&\leq \sum_{j=1}^{k}\binom{k}{j} c_{0}k(\epsilon_{1}k)^{j}(\frac{1}{\beta_{0}})^{j}(\frac{\rho}{4})^{k+k-1+\cdots+k-(j-1)} \\
&\leq \sum_{j=1}^{k}\binom{k}{j} c_{0}k(\epsilon_{1}k)^{j}(\frac{1}{\beta_{0}})^{j}(\frac{\rho}{4})^{\frac{k+1}{2}j}
\leq c_{0}k\sum_{j=1}^{k}\binom{k}{j}(\frac{\epsilon_{3}k}{\beta_{0}})^{j} \\
&\leq c_{0}(\frac{\rho}{32})^{k}(\sum_{j=0}^{k-1}\frac{1}{k^{j}})\leq 2c_{0}(\frac{\rho}{32})^{k}. \\
\end{split}
\end{equation}
This is equivalent to
\begin{equation*}
\int_{K_{0}}\exp(-\alpha_{0}(\frac{\rho}{4})^{k}\phi)
((dd^{c}u_{1}+\epsilon\omega_{FS}+\omega_{FS})
\wedge\cdot\cdot\cdot\wedge(dd^{c}u_{k}+\epsilon\omega_{FS}+\omega_{FS})-\omega_{FS}^{k})\leq 2c_{0}(\frac{\rho}{32})^{k}.
\end{equation*}
By Lemma 2.8, there is a positive constant $N^{\prime}$ independent of $k$ and $\rho$ such that $N_{k}\leq N^{\prime}8^{k}$.
Let $c_{5}=2c_{0}N^{\prime}$. Due to the homogeneity of $\mathbb{P}^{k}$, we have
\begin{equation*}
\int_{\mathbb{P}^{k}}\exp(-\alpha_{0}(\frac{\rho}{4})^{k}\phi)
((dd^{c}u_{1}+\epsilon\omega_{FS}+\omega_{FS})\wedge\cdot\cdot\cdot\wedge(dd^{c}u_{k}+\epsilon\omega_{FS}+\omega_{FS})-\omega_{FS}^{k})\leq c_{5}(\frac{\rho}{4})^{k}.
\end{equation*}
The proof is completed.
\end{proof}

\begin{remark}
Since $(dd^{c}u_{j}+\omega_{FS})^{k}\leq(dd^{c}u_{j}+\epsilon\omega_{FS}+\omega_{FS})^{k}$, the above proposition,
combined with (10), gives the following estimate
\begin{equation*}
\int_{\mathbb{P}^{k}}\exp(-\alpha_{0}(\frac{\rho}{4})^{k}\phi)(dd^{c}u_{1}+\omega_{FS})\wedge\cdot\cdot\cdot\wedge(dd^{c}u_{k}+\omega_{FS})\leq c_{0}k+c_{5}(\frac{\rho}{4})^{k}\leq c_{0}k+c_{5}
\end{equation*}
for all $\phi\in\mathcal{F}$.
In other words, $(dd^{c}u_{1}+\omega_{FS})\wedge\cdot\cdot\cdot\wedge(dd^{c}u_{k}+\omega_{FS})$ is $(c_{0}k+c_{5}, \alpha_{0}(\frac{\rho}{4})^{k})$-moderate.
\end{remark}

\section{Zeros of sections of ample line bundles}
In this section, we will prove the main theorem.
Consider a projective manifold $X$ of dimension $k$ and an ample line bundle $L$ on $X$.
There exists a smooth Hermitian metric $h$ such that
\begin{equation*}
c_{1}(h)=-dd^{c}\log h(e_{L},e_{L})^{\frac{1}{2}}
\end{equation*}
is a strictly positive $(1,1)$-form, where $e_{L}$ is a local holomorphic section on $L$.
As we know, $c_{1}(h)$ represents the Chern class $c_{1}(L)\in H^{2}(X,\mathbb{Z})$.
Let $\omega =c_{1}(h)$ be the K\"{a}hler form, $\int_{X}\omega^{k}=c_{1}(L)^{k}\in\mathbb{Z}^{+}$.
The line bundle $L^{n}$ of the $n$th tensor power of $L$ has a natural Hermitian metric $h_{n}$ induced by $h$.
The space $H^{0}(X,L^{n})$ of holomorphic sections of $L^{n}$ has the following inner product,
\begin{equation*}
\langle s_{1},s_{2}\rangle: =\frac{1}{c_{1}(L)^{k}}\int_{X}h_{n}(s_{1},s_{2})\omega^{k}
\end{equation*}
$\forall s_{1},s_{2}\in H^{0}(X,L^{n})$. For more details, see \cite{dj}

Denote by $\mathbb{P}H^{0}(X,L^{n})^{\star}$ the dual of $\mathbb{P}H^{0}(X,L^{n})$.
Let $\omega_{FS}$ be the standard normalized Fubini-Study form with no confusion.
When $n$ is big enough, the Kodaira map is defined by
\begin{equation*}
\begin{split}
& \Phi_{n}: X\rightarrow\mathbb{P}H^{0}(X,L^{n})^{\star}, \\
& \Phi_{n}(x):=\{s\in\mathbb{P}H^{0}(X,L^{n}): s(x)=0\}. \\
\end{split}
\end{equation*}
Note that $\Phi_{n}(x)$ can be regarded as a hyperplane in $\mathbb{P}H^{0}(X,L^{n})$.
Choose an orthonormal basis $\{s_{n,j}\}_{j=0}^{k_{n}}$ with respect to the above inner product on $H^{0}(X,L^{n})$.
Then by an identification via the basis, we obtain a holomorphic map
\begin{equation*}
\Phi_{n}: X\rightarrow\mathbb{P}^{k_{n}}.
\end{equation*}
Let $U\subset X$ be a contractible Stein open subset, $e_{L}$ a local holomorphic frame of $L$ on $U$.
Then there exist holomorphic functions $\tilde s_{n,j}$ on $U$ such that $s_{n,j}=\tilde s_{n,j}e_{L}^{\otimes n}$.
Then the map is expressed locally as follows,
\begin{equation*}
\Phi_{n}(x)=[\tilde s_{n,0}(x):...:\tilde s_{n,k_{n}}(x)], \quad \forall x\in U.
\end{equation*}
We call $\Phi_{n}^{\star}(\omega_{FS})$ the Fubini-Study current which is independent of the choice of basis.
Recall that a meromorphic transform between two complex manifolds is a surjective multivalued map with an analytic graph.
To be more precise, let $(X_{1},\omega_{1}), (X_{2},\omega_{2})$ be two compact K\"{a}hler manifolds of dimension $n_{1}$ and $n_{2}$ respectively,
a meromorphic transform $F: X_{1}\rightarrow X_{2}$ is the data of an analytic subset $\Gamma\subset X_{1}\times X_{2}$ of pure dimension
$n_{2}+l$ such that the natural projections $\pi_{1}: X_{1}\times X_{2}\rightarrow X_{1}$ and $\pi_{2}: X_{1}\times X_{2}\rightarrow X_{2}$
restricted to each irreducible component of $\Gamma$ are surjective.
$\Gamma$ is called the graph of $F$.
We write $F=\pi_{2}\circ (\pi_{1}|_{\Gamma})^{-1}$.
The dimension of the fiber $F^{-1}(x_{2}):=\pi_{1}(\pi_{2}^{-1}|_{\Gamma}(x_{2}))$ is equal to $l$
for the point $x_{2}\in X_{2}$ generic. This is the codimension of the meromorphic transform $F$.
If $T$ is a current of bidegree $(m,m)$ on $X_{2}$, $n_{2}+l-n_{1}\leq m\leq n_{2}$, we define $F^{\star}(T):=(\pi_{1})_{\star}(\pi_{2}^{\star}(T)\wedge [\Gamma])$,
where $[\Gamma]$ is the current of integration over $\Gamma$.
The intermediate degree of order $m$ of a meromorphic transform $F: X_{1}\rightarrow X_{2}$ is defined by
\begin{equation*}
\lambda_{m}(F)=\int_{X_{1}}F^{\star}(\omega_{2}^{m})\wedge\omega^{n_{2}+l-m}_{1}
=\int_{X_{2}}\omega^{m}_{2}\wedge F_{\star}(\omega^{n_{2}+l-m}_{1}).
\end{equation*}

Now we consider the meromorphic transforms from $X$ to $\mathbb{P}H^{0}(X,L^{n})$ induced by the Kodaira maps.
The meromorphic transform $F_{n}: X\rightarrow \mathbb{P}H^{0}(X,L^{n})$ has the following graph
\begin{equation*}
\Gamma_{n}=\{(x,s)\in X\times\mathbb{P}H^{0}(X,L^{n}): s(x)=0\}.
\end{equation*}
Since $X$ is ample,
for every point $x\in X$, there exists a point $s\in\mathbb{P}H^{0}(X,L^{n})$ such that $s(x)=0$.
Hence the projection from $\Gamma_{n}$ to $X$ is surjective.
Since $L^{n}$ is not trivial, there are no nowhere vanishing sections.
That is to say, every point $s\in\mathbb{P}H^{0}(X,L^{n})$ must vanish at some point $x\in X$.
Hence the projection from $\Gamma_{n}$ to $\mathbb{P}H^{0}(X,L^{n})$ is surjective.
Then $F_{n}$ is indeed a meromorphic transform of codimension $k-1$.
For more details about these meromorphic transforms, refer to \cite[Example 3.6(c)]{ds1}.
Note that $\delta_{n}:=\lambda_{k_{n}-1}(F_{n})$ (resp. $d_{n}:=\lambda_{k_{n}}(F_{n})$)
is the intermediate degree of order $k_{n}-1$ (resp. $k_{n}$) of $F_{n}$.
\begin{lemma}
In the above setting, $\delta_{n}$ is bounded and $d_{n}=nc_{1}(L)^{k}$.
Moreover, $F_{n}^{\star}(\omega_{FS}^{k_{n}})=\Phi_{n}^{\star}(\omega_{FS})$.
\end{lemma}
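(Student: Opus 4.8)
The plan is to reduce both assertions to the identity $F_{n}^{\star}(\omega_{FS}^{k_{n}})=\Phi_{n}^{\star}(\omega_{FS})$, which carries the geometry; the two degree computations then follow by bookkeeping. For the identity I would begin from the tautological description of the graph: writing $\mathcal{I}\subset\mathbb{P}H^{0}(X,L^{n})^{\star}\times\mathbb{P}H^{0}(X,L^{n})$ for the incidence hypersurface $\{(H,p):p\in H\}$, one has, for $x\in X$ and $s\in\mathbb{P}H^{0}(X,L^{n})$, that $(x,s)\in\Gamma_{n}$ exactly when $s$ lies on the hyperplane $\Phi_{n}(x)$; thus $\Gamma_{n}=(\Phi_{n}\times\id)^{-1}(\mathcal{I})$ and $[\Gamma_{n}]=(\Phi_{n}\times\id)^{\star}[\mathcal{I}]$. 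Since $\mathcal{I}$ is the zero divisor of a section of $\mathcal{O}(1)\boxtimes\mathcal{O}(1)$, its cohomology class is $\pr_{1}^{\star}\omega_{FS}+\pr_{2}^{\star}\omega_{FS}$; the partial push-forward $(\pr_{1})_{\star}\big([\mathcal{I}]\wedge\pr_{2}^{\star}\omega_{FS}^{k_{n}}\big)$ is a closed positive unitarily invariant $(1,1)$-current on $\mathbb{P}H^{0}(X,L^{n})^{\star}\cong\mathbb{P}^{k_{n}}$, hence equals $c\,\omega_{FS}$, and pairing with $\omega_{FS}^{k_{n}-1}$, using $\pr_{2}^{\star}\omega_{FS}^{k_{n}+1}=0$ and $\int_{\mathbb{P}^{k_{n}}}\omega_{FS}^{k_{n}}=1$, gives $c=1$. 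Finally, the square with horizontal arrows $\Phi_{n}\times\id$, $\Phi_{n}$ and vertical arrows the projections onto $X$ and onto $\mathbb{P}H^{0}(X,L^{n})^{\star}$ is Cartesian, and $\Phi_{n}$ is an embedding for $n$ large, so base change (the meromorphic-transform formalism of \cite[Example 3.6(c)]{ds1}) gives
\[
F_{n}^{\star}(\omega_{FS}^{k_{n}})=(\pi_{1})_{\star}\big((\Phi_{n}\times\id)^{\star}\big([\mathcal{I}]\wedge\pr_{2}^{\star}\omega_{FS}^{k_{n}}\big)\big)=\Phi_{n}^{\star}\big((\pr_{1})_{\star}\big([\mathcal{I}]\wedge\pr_{2}^{\star}\omega_{FS}^{k_{n}}\big)\big)=\Phi_{n}^{\star}(\omega_{FS}).
\]
Equivalently, and more transparently: $\omega_{FS}^{k_{n}}$ is a smooth probability measure on $\mathbb{P}H^{0}(X,L^{n})$ and $F_{n}^{-1}(s)=\mathbf{Z}_{s}$ for a.e.\ $s$, so $F_{n}^{\star}(\omega_{FS}^{k_{n}})=\int[\mathbf{Z}_{s}]\,d\omega_{FS}^{k_{n}}(s)$, while the Crofton identity $\omega_{FS}=\int[H_{s}]\,d\omega_{FS}^{k_{n}}(s)$ on $\mathbb{P}H^{0}(X,L^{n})^{\star}$, with $\Phi_{n}^{-1}(H_{s})=\mathbf{Z}_{s}$ and $\Phi_{n}^{\star}\mathcal{O}(1)=L^{n}$, gives $\Phi_{n}^{\star}(\omega_{FS})=\int[\mathbf{Z}_{s}]\,d\omega_{FS}^{k_{n}}(s)$ as well.

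Granting the identity, the formula for $d_{n}$ is immediate: with $n_{1}=k$, $n_{2}=k_{n}$ and $l=k-1$ one has $n_{2}+l-k_{n}=k-1$, so
\[
d_{n}=\lambda_{k_{n}}(F_{n})=\int_{X}F_{n}^{\star}(\omega_{FS}^{k_{n}})\wedge\omega^{k-1}=\int_{X}\Phi_{n}^{\star}(\omega_{FS})\wedge\omega^{k-1}.
\]
For $n$ large $\Phi_{n}^{\star}(\omega_{FS})$ is a smooth closed $(1,1)$-form representing $c_{1}(\Phi_{n}^{\star}\mathcal{O}(1))=c_{1}(L^{n})=n\,c_{1}(L)=n[\omega]$, whence $d_{n}=n\int_{X}\omega^{k}=n\,c_{1}(L)^{k}$.

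For $\delta_{n}$ I would bypass the identity and evaluate the $(0,0)$-current $F_{n}^{\star}(\omega_{FS}^{k_{n}-1})$ by fibre integration. The projection $\pi_{2}$ maps the fibre $\pi_{1}^{-1}(x)$ of $\Gamma_{n}$ over $x\in X$ isomorphically onto the linear hyperplane $\{s:s(x)=0\}\cong\mathbb{P}^{k_{n}-1}$ of $\mathbb{P}H^{0}(X,L^{n})$ (which is proper since $L^{n}$ is globally generated for $n$ large), and the restriction of $\omega_{FS}$ from $\mathbb{P}^{k_{n}}$ to a linear $\mathbb{P}^{k_{n}-1}$ is again the normalized Fubini--Study form, so the fibre integral is $\int_{\mathbb{P}^{k_{n}-1}}\omega_{FS}^{k_{n}-1}=1$; thus $F_{n}^{\star}(\omega_{FS}^{k_{n}-1})\equiv 1$. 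Since $n_{2}+l-(k_{n}-1)=k$,
\[
\delta_{n}=\lambda_{k_{n}-1}(F_{n})=\int_{X}F_{n}^{\star}(\omega_{FS}^{k_{n}-1})\wedge\omega^{k}=\int_{X}\omega^{k}=c_{1}(L)^{k},
\]
which is in particular bounded independently of $n$.

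The genuinely delicate step is the first one: one must verify that $[\Gamma_{n}]$ is exactly the pull-back $(\Phi_{n}\times\id)^{\star}[\mathcal{I}]$ with no extra multiplicity — equivalently, that $\Gamma_{n}$ has the expected codimension $1$ everywhere, which holds once $\Phi_{n}$ is an embedding — and that pull-back by $\Phi_{n}\times\id$ may be exchanged with the partial push-forward; in the slicing formulation this amounts to saying that for $\omega_{FS}^{k_{n}}$-almost every $s$ the slice of $\Gamma_{n}$ over $s$ is the reduced divisor $\mathbf{Z}_{s}$ and that $\Phi_{n}^{\star}$ commutes with averaging in $s$. These are precisely the facts packaged in the meromorphic-transform machinery of \cite{ds1}; once invoked, the computations above are formal.
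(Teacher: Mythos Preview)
Your argument is correct. Your slicing/Crofton route for the identity is essentially the paper's proof: the paper pairs $F_{n}^{\star}(\omega_{FS}^{k_{n}})$ with a test $(k-1,k-1)$-form $\psi$, unwinds the definition of $F_{n}^{\star}$ to reach $\int_{\mathbb{P}H^{0}(X,L^{n})}\langle[\mathbf{Z}_{s}],\psi\rangle\,\omega_{FS}^{k_{n}}(s)$, and then invokes \cite[Proposition~4.2]{cm} --- which is exactly the Crofton-type averaging $\Phi_{n}^{\star}(\omega_{FS})=\int[\mathbf{Z}_{s}]\,d\omega_{FS}^{k_{n}}(s)$ that you use --- to conclude. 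Your incidence-variety/base-change derivation is a genuinely different, more cohomological route that the paper does not present; it has the advantage of making the role of $\Phi_{n}^{\star}\mathcal{O}(1)=L^{n}$ and of the class $[\mathcal{I}]=\pr_{1}^{\star}\omega_{FS}+\pr_{2}^{\star}\omega_{FS}$ transparent, at the cost of the base-change verification you flag at the end. For the degrees the paper simply cites \cite[Lemma~7.1]{ds1}, whereas you compute both directly: your fibre-integration argument that $F_{n}^{\star}(\omega_{FS}^{k_{n}-1})\equiv 1$ and hence $\delta_{n}=c_{1}(L)^{k}$ is in fact sharper than the boundedness stated in the lemma, and your computation of $d_{n}$ from the identity together with $[\Phi_{n}^{\star}(\omega_{FS})]=n[\omega]$ is the cohomological argument underlying the cited reference.
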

\begin{proof}
The first assertion is proved in \cite[Lemma 7.1]{ds1} by using cohomological arguments.
We prove the second one with the definition of $F_{n}^{\star}$.
For any test $(k-1,k-1)$-form $\psi$, we have
\begin{equation*}
\bigl<F_{n}^{\star}(\omega_{FS}^{k_{n}}), \psi\bigr>=\int_{\Gamma_{n}}\pi_{1}^{\star}(\psi)\wedge\pi_{2}^{\star}(\omega_{FS}^{k_{n}})
\end{equation*}
\begin{equation*}
\begin{split}
& =\int_{\mathbb{P}H^{0}(X,L^{n})}\pi_{2\star}\pi_{1}^{\star}(\psi)\wedge\omega_{FS}^{k_{n}} \\
& =\int_{\mathbb{P}H^{0}(X,L^{n})}\int_{\pi_{2}^{-1}(s_{n})\cap\Gamma_{n}}\pi_{1}^{\star}(\psi)\omega_{FS}^{k_{n}}(s_{n}) \\
& =\int_{\mathbb{P}H^{0}(X,L^{n})}\int_{\{x\in X: s_{n}(x)=0\}}\psi \omega_{FS}^{k_{n}}(s_{n}) \\
& =\int_{\mathbb{P}H^{0}(X,L^{n})}\bigl<[\bm{Z}_{\bm{s_{n}}}], \psi \bigr> \omega_{FS}^{k_{n}}(s_{n}) \\
& =\bigl<\Phi_{n}^{\star}(\omega_{FS}), \psi\bigr>. \\
\end{split}
\end{equation*}
The last equality follows from \cite[Proposition 4.2]{cm}.
This completes the proof.
\end{proof}

From now on we introduce some other notations and properties from \cite{ds1}.
Suppose that $\mu$ is a PLB probability measure on $\mathbb{P}^{k}$. $\mathcal{F}$ is defined in (1) when $X=\mathbb{P}^{k}$.
Let
\begin{equation*}
\begin{split}
& Q(\mathbb{P}^{k},\omega_{FS})=\{\phi ~q.p.s.h. ~on~ \mathbb{P}^{k}: dd^{c}\phi\geq -\omega_{FS}\}, \\
& R(\mathbb{P}^{k}, \omega_{FS},\mu)=\sup_{\phi}\left\{-\int\phi d\mu , \phi\in\mathcal{F}\right\}, \\
\end{split}
\end{equation*}
\begin{equation*}
\begin{split}
& S(\mathbb{P}^{k}, \omega_{FS},\mu)=\sup_{\phi}\left\{\left|\int\phi d\mu\right| , \phi\in Q(\mathbb{P}^{k},\omega_{FS}), \int\phi\omega_{FS}^{k}=0\right\}, \\
& \Delta(\mathbb{P}^{k}, \omega_{FS},\mu ,t)=\sup_{\phi}\left\{\mu (\phi<-t) , \phi\in Q(\mathbb{P}^{k},\omega_{FS}), \int\phi d\mu =0\right\} \\
\end{split}
\end{equation*}
for any $t>0$.
When $\mu =\omega_{FS}^{k}$, we write $R^{0}(\mathbb{P}^{k}, \omega_{FS})=R(\mathbb{P}^{k}, \omega_{FS},\mu)$.
These constants are related to Alexander-Dinh-Sibony capacity \cite{ds1}.
\begin{proposition}
$S(\mathbb{P}^{k}, \omega_{FS},\mu)\leq R(\mathbb{P}^{k}, \omega_{FS},\mu)+R^{0}(\mathbb{P}^{k}, \omega_{FS})$.
\end{proposition}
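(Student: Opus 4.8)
The plan is to compare $\mu$ against the reference measure $\omega_{FS}^k$ by splitting an arbitrary $\phi\in\mathcal{F}$ into a part normalized with respect to $\omega_{FS}^k$ plus a constant. Given $\phi\in\mathcal{F}$, so $dd^c\phi\geq-\omega_{FS}$ and $\max_{\mathbb{P}^k}\phi=0$, set $c_\phi:=\int\phi\,\omega_{FS}^k$ and $\widetilde\phi:=\phi-c_\phi$. Then $\widetilde\phi\in Q(\mathbb{P}^k,\omega_{FS})$ (the inequality $dd^c\widetilde\phi\geq-\omega_{FS}$ is unchanged by subtracting a constant) and $\int\widetilde\phi\,\omega_{FS}^k=0$, so $\widetilde\phi$ is exactly an admissible competitor in the definition of $S(\mathbb{P}^k,\omega_{FS},\mu)$. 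Hence $\bigl|\int\widetilde\phi\,d\mu\bigr|\leq S(\mathbb{P}^k,\omega_{FS},\mu)$, which gives
\begin{equation*}
-\int\phi\,d\mu = -c_\phi - \int\widetilde\phi\,d\mu \leq -c_\phi + S(\mathbb{P}^k,\omega_{FS},\mu).
\end{equation*}

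Next I would bound $-c_\phi = -\int\phi\,\omega_{FS}^k$ above by $R^0(\mathbb{P}^k,\omega_{FS})$: since $\phi\in\mathcal{F}$ and $R^0$ is by definition the supremum of $-\int\phi\,\omega_{FS}^k$ over $\phi\in\mathcal{F}$, this is immediate. Combining the two inequalities yields $-\int\phi\,d\mu\leq R^0(\mathbb{P}^k,\omega_{FS})+S(\mathbb{P}^k,\omega_{FS},\mu)$ for every $\phi\in\mathcal{F}$; taking the supremum over $\phi\in\mathcal{F}$ gives $R(\mathbb{P}^k,\omega_{FS},\mu)\leq R^0(\mathbb{P}^k,\omega_{FS})+S(\mathbb{P}^k,\omega_{FS},\mu)$.

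This, however, is the reverse of what is claimed; the actual statement $S\leq R+R^0$ is obtained by running essentially the same decomposition in the other direction. Starting from an arbitrary competitor $\psi\in Q(\mathbb{P}^k,\omega_{FS})$ with $\int\psi\,\omega_{FS}^k=0$, I would normalize by its maximum: $\psi-\max\psi\in\mathcal{F}$, so $-\int(\psi-\max\psi)\,d\mu\leq R(\mathbb{P}^k,\omega_{FS},\mu)$ and likewise $-\int(\psi-\max\psi)\,\omega_{FS}^k\leq R^0(\mathbb{P}^k,\omega_{FS})$; the second inequality pins down $\max\psi$ (using $\int\psi\,\omega_{FS}^k=0$) as $\max\psi\leq R^0$, while the first then controls $-\int\psi\,d\mu\leq R+R^0$. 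To control $+\int\psi\,d\mu$ one repeats with $-\psi$ in place of $\psi$, noting $-\psi$ need not lie in $Q$, so the cleaner route is: bound $\int\psi\,d\mu=\int\psi\,d\mu-\int\psi\,\omega_{FS}^k$ and estimate each term separately via the $\mathcal{F}$-normalization of $\psi$. The only subtlety — and the main thing to get right — is handling the sign and the normalization consistently so that the constant shift is absorbed by $R^0$ and the zero-mean part by $R$ (or vice versa); there is no hard analysis here, it is purely a bookkeeping argument with the definitions of $\mathcal{F}$, $Q$, $R$, $R^0$, and $S$.
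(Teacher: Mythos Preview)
The paper does not actually prove this proposition; it simply attributes it to Dinh--Sibony \cite{ds1}, Section~2. So there is no in-paper argument to compare against, and your task is just to supply a clean proof.

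Your second attempt has the right idea and can be completed. For $\psi\in Q(\mathbb{P}^k,\omega_{FS})$ with $\int\psi\,\omega_{FS}^k=0$, set $m:=\max_{\mathbb{P}^k}\psi$, so $\psi-m\in\mathcal{F}$. From $-\int(\psi-m)\,\omega_{FS}^k\le R^0$ together with $\int\psi\,\omega_{FS}^k=0$ you get $m\le R^0$; and $\psi\le m$ with $\int\psi\,\omega_{FS}^k=0$ forces $m\ge 0$. From $-\int(\psi-m)\,d\mu\le R$ you get $-\int\psi\,d\mu\le R-m\le R$. For the other sign you do \emph{not} need $-\psi$ (which, as you note, is not in $Q$) nor the identity $\int\psi\,d\mu=\int\psi\,d\mu-\int\psi\,\omega_{FS}^k$, which as written is vacuous: just use the pointwise bound $\psi\le m$ to get $\int\psi\,d\mu\le m\le R^0$. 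Hence $\bigl|\int\psi\,d\mu\bigr|\le\max(R,R^0)\le R+R^0$, and taking the supremum over admissible $\psi$ gives the proposition (in fact the slightly sharper $S\le\max(R,R^0)$).

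Your first paragraph establishes $R\le R^0+S$, which is the reverse inequality; you caught this yourself, so it should simply be deleted. The remaining gap in your write-up is the treatment of $+\int\psi\,d\mu$: the route you sketch is unclear, but the one-line bound $\int\psi\,d\mu\le m\le R^0$ above closes it with no further bookkeeping.
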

The above proposition comes from Section 2 in \cite{ds1}.
There is an important estimate for $R^{0}(\mathbb{P}^{k}, \omega_{FS})$, see \cite[Proposition A.3]{ds1}.
\begin{proposition}
\begin{equation*}
R^{0}(\mathbb{P}^{k}, \omega_{FS})\leq \frac{1}{2}(1+\log k) .
\end{equation*}
\end{proposition}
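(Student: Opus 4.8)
The plan is to unwind the definition $R^{0}(\mathbb{P}^{k},\omega_{FS})=\sup\{-\int_{\mathbb{P}^{k}}\phi\,\omega_{FS}^{k}:\phi\in\mathcal{F}\}$ and to bound $-\int\phi\,\omega_{FS}^{k}$ uniformly in $\phi\in\mathcal{F}$. Fix $\phi\in\mathcal{F}$. Since $\mathbb{P}^{k}$ is compact and $\phi$ is upper semicontinuous, the value $0=\max_{\mathbb{P}^{k}}\phi$ is attained somewhere; as $\omega_{FS}^{k}$ is invariant under the unitary group and $\mathcal{F}$ is stable under composition with unitary transformations (which act transitively on $\mathbb{P}^{k}$), I may assume $\phi([1:0:\cdots:0])=0$. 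Pass to the affine chart $\theta$ of (2), with coordinate $w\in\mathbb{C}^{k}$; there $(\theta^{-1})^{\star}\omega_{FS}=dd^{c}g$ with $g(w):=\tfrac{1}{2}\log(1+\|w\|^{2})$ and $g(0)=0$. Set $u:=\phi\circ\theta^{-1}+g$, so that $dd^{c}u=dd^{c}(\phi\circ\theta^{-1})+dd^{c}g\geq0$; thus $u$ is p.s.h.\ on $\mathbb{C}^{k}$ and $u(0)=\phi([1:0:\cdots:0])+g(0)=0$. Since $\omega_{FS}^{k}$ is a smooth volume form it charges no proper analytic subset, so restricting to the chart and writing $\mu:=(dd^{c}g)^{k}=\omega_{FS}^{k}|_{\mathbb{C}^{k}}$ one gets
\begin{equation*}
-\int_{\mathbb{P}^{k}}\phi\,\omega_{FS}^{k}=\int_{\mathbb{C}^{k}}(g-u)\,\mu=\int_{\mathbb{C}^{k}}g\,\mu-\int_{\mathbb{C}^{k}}u\,\mu ,
\end{equation*}
all terms being finite because $\mathcal{F}$ is bounded in $L^{1}(\omega_{FS}^{k})$ and $g\geq0$ with $\int g\,\mu<\infty$ (evaluated below).

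Next I would show $\int_{\mathbb{C}^{k}}u\,\mu\geq u(0)=0$. The measure $\mu$, being the push-forward of $\omega_{FS}^{k}$ under $\theta$, is invariant under the group $U(k)$ acting on $\mathbb{C}^{k}$. Averaging $u$ over $U(k)$ (with Haar probability measure $d\gamma$) and using Fubini together with the $U(k)$-invariance of $\mu$,
\begin{equation*}
\int_{\mathbb{C}^{k}}u\,\mu=\int_{\mathbb{C}^{k}}\widehat u\,\mu,\qquad \widehat u(w):=\int_{U(k)}u(\gamma w)\,d\gamma .
\end{equation*}
For each line $\mathbb{C}w$ through the origin, $\zeta\mapsto\widehat u(\zeta w)=\int_{U(k)}u(\zeta\gamma w)\,d\gamma$ is subharmonic in $\zeta$ (an average of restrictions of the p.s.h.\ function $u$ to complex lines, not identically $-\infty$ since its value at $\zeta=0$ is $u(0)=0$), and $\widehat u$ is invariant under $w\mapsto e^{it}w$. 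Applying the sub-mean value inequality at the origin over the circle $|\zeta|=1$ gives $\widehat u(0)\leq\frac{1}{2\pi}\int_{0}^{2\pi}\widehat u(e^{it}w)\,dt=\widehat u(w)$ for every $w$, so $\widehat u\geq\widehat u(0)=0$ pointwise; integrating against the probability measure $\mu$ yields $\int u\,\mu=\int\widehat u\,\mu\geq0$.

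It remains to evaluate $\int_{\mathbb{C}^{k}}g\,\mu=\int_{\mathbb{P}^{k}}\tfrac{1}{2}\log(1+\|w\|^{2})\,\omega_{FS}^{k}$. Lifting to the sphere $S^{2k+1}$ via the Hopf projection, whose push-forward of the uniform probability measure $\nu$ is $\omega_{FS}^{k}$, and using $1+\|w\|^{2}=\|z\|^{2}/|z_{0}|^{2}=1/|z_{0}|^{2}$ on $S^{2k+1}$, this integral equals $-\tfrac{1}{2}\int_{S^{2k+1}}\log|z_{0}|^{2}\,d\nu$. Under $\nu$ the variable $|z_{0}|^{2}$ has law $\mathrm{Beta}(1,k)$, and a standard computation (equivalently $\int\log|z_{0}|^{2}\,d\nu=\psi(1)-\psi(k+1)$ for the digamma function $\psi$) gives $\int_{S^{2k+1}}\log|z_{0}|^{2}\,d\nu=-\sum_{j=1}^{k}\tfrac{1}{j}$, hence $\int_{\mathbb{C}^{k}}g\,\mu=\tfrac{1}{2}\sum_{j=1}^{k}\tfrac{1}{j}\leq\tfrac{1}{2}(1+\log k)$. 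Combining the three displays, $-\int_{\mathbb{P}^{k}}\phi\,\omega_{FS}^{k}\leq\tfrac{1}{2}(1+\log k)$ for every $\phi\in\mathcal{F}$, and taking the supremum proves the proposition.

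The main obstacle is the inequality $\int u\,\mu\geq u(0)$: it needs the unitary invariance of $\mu=(dd^{c}g)^{k}$ together with enough integrability of $u$ (supplied by the $L^{1}$-boundedness of $\mathcal{F}$) to justify the symmetrization, and then the reduction of the several-variable sub-mean value property to the one-variable one along complex lines. The remaining ingredients — the change of variables to the affine chart and the Beta-integral evaluation of $\int g\,(dd^{c}g)^{k}$ — are routine.
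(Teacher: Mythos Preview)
The paper does not give its own proof of this proposition; it is merely quoted from \cite[Proposition A.3]{ds1}. Your argument is correct and is in fact the standard one used by Dinh--Sibony: normalize so that $\max\phi$ is attained at $[1:0:\cdots:0]$, write $\phi=u-g$ in the affine chart with $u$ p.s.h.\ and $u(0)=0$, use radial symmetry of $\mu=\omega_{FS}^{k}|_{\mathbb{C}^{k}}$ together with the sub-mean value property to get $\int u\,d\mu\geq u(0)=0$, and compute $\int g\,d\mu=\tfrac12\sum_{j=1}^{k}\tfrac1j$ via the Beta law of $|z_{0}|^{2}$ on $S^{2k+1}$.

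One cosmetic remark: your detour through complex lines to prove $\widehat u(w)\geq\widehat u(0)$ is more than you need. Since $\widehat u(w)$ is exactly the spherical mean of the subharmonic function $u$ over $\{\|v\|=\|w\|\}$ (the $U(k)$-orbit), the classical monotonicity of sphere means of subharmonic functions in $\mathbb{R}^{2k}$ gives $\widehat u(w)\geq u(0)=0$ directly, without restricting to complex lines. This also sidesteps any worry about $\widehat u$ being well defined, since sphere means of a subharmonic function not identically $-\infty$ are finite for every radius.
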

Let $\sigma_{n}$ be a PLB probability measure on $\mathbb{P}H^{0}(X,L^{n})$.
To simplify the notations, let
\begin{equation*}
\begin{split}
&R_{n}:=R(\mathbb{P}H^{0}(X,L^{n}),\omega_{FS},\sigma_{n}), \\
&R^{0}_{n}:=R(\mathbb{P}H^{0}(X,L^{n}),\omega_{FS},\omega^{k_{n}}_{FS}), \\
&S_{n}:=S(\mathbb{P}H^{0}(X,L^{n}),\omega_{FS},\sigma_{n}), \\
&\Delta_{n}(t):=\Delta(\mathbb{P}H^{0}(X,L^{n}),\omega_{FS},\sigma_{n},t). \\
\end{split}
\end{equation*}
Let $\mathbb{P}^{X}:=\Pi_{n\geq 1}\mathbb{P}H^{0}(X,L^{n})$ endowed with its measure
$\sigma=\Pi_{n\geq 1}\sigma_{n}$. Denote by $\delta_{z}$ the Dirac measure at a point $z$.
We specify the following two theorems for the above case, see \cite{ds1}.
\begin{theorem}
Suppose that the sequence $\{R_{n}\delta_{n}d^{-1}_{n}\}$ tends to $0$ and
\begin{equation*}
\Sigma_{n\geq 1}\Delta_{n}(\delta^{-1}_{n}d_{n}t)<\infty
\end{equation*}
for all $t>0$.
Then for almost everywhere $s=(s_{n})\in\mathbb{P}^{X}$ with respect to $\sigma$, the sequence $\langle d_{n}^{-1}(F_{n}^{\star}(\delta_{s_{n}})-F_{n}^{\star}(\sigma_{n})), \psi \rangle$
converges to $0$ uniformly on the bounded set of $(k-1,k-1)$-forms on $X$ of class $\mathscr{C}^{2}$.
\end{theorem}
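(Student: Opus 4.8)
The plan is to specialize Dinh--Sibony's argument \cite{ds1} to the meromorphic transforms $F_{n}$, so I describe the main steps. It suffices to show that for $\sigma$-a.e. $s=(s_{n})$ one has $\sup_{\psi}\bigl|\langle d_{n}^{-1}(F_{n}^{\star}(\delta_{s_{n}})-F_{n}^{\star}(\sigma_{n})),\psi\rangle\bigr|\to 0$, the supremum over $(k-1,k-1)$-forms $\psi$ with $\|\psi\|_{\mathscr{C}^{2}}\leq 1$. First I would pass to the push-forward: with $\Gamma_{n}$ the graph of $F_{n}$, for any probability measure $\mu$ on $\mathbb{P}H^{0}(X,L^{n})$ one has $\langle F_{n}^{\star}(\mu),\psi\rangle=\langle\mu,(F_{n})_{\star}\psi\rangle$, where $(F_{n})_{\star}\psi:=(\pi_{2})_{\star}\bigl([\Gamma_{n}]\wedge\pi_{1}^{\star}\psi\bigr)$ is a \emph{function} on $\mathbb{P}H^{0}(X,L^{n})$; moreover $F_{n}^{\star}(\delta_{s_{n}})=[\mathbf{Z}_{s_{n}}]$ has mass $d_{n}=nc_{1}(L)^{k}$ and $(F_{n})_{\star}\psi(t)=\int_{\mathbf{Z}_{t}}\psi$, so $\|(F_{n})_{\star}\psi\|_{\infty}\leq C d_{n}\|\psi\|_{\mathscr{C}^{0}}$ with $C=C(X)$. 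Consequently $\psi\mapsto d_{n}^{-1}\langle\delta_{s_{n}}-\sigma_{n},(F_{n})_{\star}\psi\rangle$ is $2C$-Lipschitz in the $\mathscr{C}^{0}$-norm, uniformly in $n$ and $s_{n}$; since the unit ball of $\mathscr{C}^{2}$ is totally bounded in $\mathscr{C}^{0}$, it is enough to prove the a.s. convergence $d_{n}^{-1}\langle\delta_{s_{n}}-\sigma_{n},(F_{n})_{\star}\psi\rangle\to 0$ for each $\psi$ running over a fixed countable family (the union of finite $\mathscr{C}^{0}$-nets of the $\mathscr{C}^{2}$-ball), the uniform statement then following by the triangle inequality through these nets.

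Next I would extract the quasi-p.s.h. structure of $(F_{n})_{\star}\psi$. As $[\Gamma_{n}]$ is closed, $dd^{c}((F_{n})_{\star}\psi)=(F_{n})_{\star}(dd^{c}\psi)$; since $dd^{c}\psi$ is a top-degree form with $|dd^{c}\psi|\leq C\|\psi\|_{\mathscr{C}^{2}}\omega^{k}$, one gets $-C\|\psi\|_{\mathscr{C}^{2}}(F_{n})_{\star}(\omega^{k})\leq dd^{c}((F_{n})_{\star}\psi)\leq C\|\psi\|_{\mathscr{C}^{2}}(F_{n})_{\star}(\omega^{k})$, where $(F_{n})_{\star}(\omega^{k})$ is a positive closed $(1,1)$-current on $\mathbb{P}H^{0}(X,L^{n})$ of mass $\delta_{n}=\lambda_{k_{n}-1}(F_{n})$, bounded by Lemma 3.1. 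Writing $(F_{n})_{\star}(\omega^{k})=\delta_{n}\omega_{FS}+dd^{c}g_{n}$ with $g_{n}/\delta_{n}\in Q(\mathbb{P}^{k_{n}},\omega_{FS})$ normalized by $\int g_{n}\,\omega_{FS}^{k_{n}}=0$, and setting $h_{n}:=(F_{n})_{\star}\psi+C\|\psi\|_{\mathscr{C}^{2}}g_{n}$ (normalized so $\int h_{n}\,\omega_{FS}^{k_{n}}=0$), I obtain $v_{n}:=h_{n}/(C\|\psi\|_{\mathscr{C}^{2}}\delta_{n})$ and $w_{n}:=g_{n}/\delta_{n}$ in $Q(\mathbb{P}^{k_{n}},\omega_{FS})$, both with zero $\omega_{FS}^{k_{n}}$-average, such that
\[
d_{n}^{-1}\langle\delta_{s_{n}}-\sigma_{n},(F_{n})_{\star}\psi\rangle=\frac{C\|\psi\|_{\mathscr{C}^{2}}\delta_{n}}{d_{n}}\Bigl[\bigl(v_{n}(s_{n})-w_{n}(s_{n})\bigr)-\int(v_{n}-w_{n})\,d\sigma_{n}\Bigr].
\]
The averages obey $|\int v_{n}\,d\sigma_{n}|,|\int w_{n}\,d\sigma_{n}|\leq S_{n}$, and $\tfrac{\delta_{n}}{d_{n}}S_{n}\to 0$: by Proposition 3.2, $S_{n}\leq R_{n}+R^{0}_{n}$; by Proposition 3.3, $R^{0}_{n}\leq\tfrac12(1+\log k_{n})=O(\log n)$ (since $k_{n}=O(n^{k})$); $\delta_{n}$ is bounded and $d_{n}=nc_{1}(L)^{k}$, so $\tfrac{\delta_{n}}{d_{n}}R^{0}_{n}\to 0$, while $\tfrac{\delta_{n}}{d_{n}}R_{n}\to 0$ is the hypothesis.

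It then remains to control the pointwise values $v_{n}(s_{n})$, $w_{n}(s_{n})$ almost surely. Any $\phi\in Q(\mathbb{P}^{k_{n}},\omega_{FS})$ with zero $\omega_{FS}^{k_{n}}$-average satisfies $\phi\leq R^{0}_{n}$ on $\mathbb{P}^{k_{n}}$ (by the definition of $R^{0}_{n}$) and, after shifting by its $\sigma_{n}$-mean, $\sigma_{n}(\phi<-t)\leq\Delta_{n}(t-S_{n})$ for $t>S_{n}$ (by the definition of $\Delta_{n}$ and since $\Delta_{n}$ is non-increasing). Fix $\epsilon>0$; for $n$ large enough that $\tfrac{\delta_{n}}{d_{n}}(R^{0}_{n}+S_{n})<\tfrac{\epsilon}{2C}$, applying this to $\phi=v_{n}$ (and to $\phi=w_{n}$) gives
\[
\sigma_{n}\Bigl\{\tfrac{C\delta_{n}}{d_{n}}|v_{n}(s_{n})|>\epsilon\Bigr\}=\sigma_{n}\Bigl\{v_{n}(s_{n})<-\tfrac{\epsilon d_{n}}{C\delta_{n}}\Bigr\}\leq\Delta_{n}\Bigl(\tfrac{\epsilon}{2C}\,\delta_{n}^{-1}d_{n}\Bigr),
\]
and likewise for $w_{n}$. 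By hypothesis $\sum_{n}\Delta_{n}\bigl(\tfrac{\epsilon}{2C}\delta_{n}^{-1}d_{n}\bigr)<\infty$; since these events are cylinders over the $n$-th coordinate and $\sigma=\prod\sigma_{n}$, the Borel--Cantelli lemma yields that for $\sigma$-a.e. $s$ one has $\tfrac{C\delta_{n}}{d_{n}}|v_{n}(s_{n})|\to 0$ and $\tfrac{C\delta_{n}}{d_{n}}|w_{n}(s_{n})|\to 0$ (take $\epsilon=1/m$, intersect over $m$ and over the countable family of $\psi$). Combined with the bound on the averages, this gives $d_{n}^{-1}\langle\delta_{s_{n}}-\sigma_{n},(F_{n})_{\star}\psi\rangle\to 0$ a.s. for each $\psi$ in the family, and the $\mathscr{C}^{0}$-net argument of the first step upgrades this to convergence uniform over $\|\psi\|_{\mathscr{C}^{2}}\leq 1$.

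The main obstacle is twofold. First, one must justify that $(F_{n})_{\star}$ commutes with $dd^{c}$ across the indeterminacy locus of the meromorphic transform, i.e. $dd^{c}((F_{n})_{\star}\psi)=(F_{n})_{\star}(dd^{c}\psi)$ together with the dominating inequality involving $(F_{n})_{\star}(\omega^{k})$; this is where the structural results of \cite{ds1} on meromorphic transforms are used. Second, every estimate above must depend on $\psi$ only through $\|\psi\|_{\mathscr{C}^{2}}$ (plus the crude $\mathscr{C}^{0}$-Lipschitz bound), so that the passage from a countable dense family to a uniform supremum is legitimate, and so that the $\Delta_{n}$-estimate for $v_{n}$ is uniform in $\psi$ — which it is, precisely because $\Delta_{n}$ is a supremum over all admissible quasi-p.s.h. functions. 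The remaining quantitative inputs are exactly Lemma 3.1 ($\delta_{n}$ bounded, $d_{n}=nc_{1}(L)^{k}$) and Propositions 3.2--3.3.
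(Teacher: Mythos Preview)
The paper does not supply its own proof of this statement: it simply records Theorem~3.4 (and Theorem~3.5) as specializations of results of Dinh--Sibony and cites \cite{ds1}, cf.\ the sentence immediately following Theorem~3.5. Your proposal therefore goes beyond what the paper does, by sketching the underlying Dinh--Sibony argument in this particular setting. The sketch is correct and is precisely their approach: push test forms forward through the correspondence to get functions on $\mathbb{P}H^{0}(X,L^{n})$, use $dd^{c}$-compatibility of the push-forward to exhibit these functions as differences of $\omega_{FS}$-p.s.h.\ functions scaled by $\delta_{n}$, control the upper tail by an $R$-type quantity and the lower tail by $\Delta_{n}$ plus Borel--Cantelli, and pass from a countable $\mathscr{C}^{0}$-net to the full $\mathscr{C}^{2}$-ball via the uniform $\mathscr{C}^{0}$-Lipschitz bound.

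One streamlining is worth noting. You normalize $v_{n},w_{n}$ to have zero $\omega_{FS}^{k_{n}}$-average, which forces you to bound their $\sigma_{n}$-averages by $S_{n}$ and then to invoke Propositions~3.2--3.3 together with the specific growth $R_{n}^{0}=O(\log n)$, $d_{n}=nc_{1}(L)^{k}$ to obtain $\delta_{n}d_{n}^{-1}S_{n}\to 0$. If instead you normalize $v_{n},w_{n}$ to have zero $\sigma_{n}$-average, the ``averages'' term vanishes identically, the pointwise upper bound becomes $v_{n},w_{n}\leq R_{n}$ directly from the definition of $R_{n}$, and the lower-tail estimate is $\sigma_{n}\{v_{n}<-t\}\leq\Delta_{n}(t)$ with no shift. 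With this normalization only the two stated hypotheses are used, and no appeal to $R_{n}^{0}$, $S_{n}$, or the explicit value of $k_{n}$ is needed; this is closer to how the general result in \cite{ds1} is phrased. Your version works here because the target spaces are projective spaces of known dimension, but the alternative normalization makes the match between hypotheses and proof more transparent.
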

\begin{theorem}
Suppose that the sequence $\{S_{n}\delta_{n}d^{-1}_{n}\}$ tends to $0$.
Then $\langle d_{n}^{-1}(F_{n}^{\star}(\sigma_{n})$ $-F_{n}^{\star}(\omega^{k_{n}}_{FS})), \psi \rangle$ converges to $0$ uniformly on the bounded set of $(k-1,k-1)$-forms on $X$ of class $\mathscr{C}^{2}$.
\end{theorem}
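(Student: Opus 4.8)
The plan is to reduce Theorem~3.6 to the quantitative estimate
\[
\Bigl|\bigl\langle \tfrac{1}{d_n}\bigl(F_n^\star(\sigma_n)-F_n^\star(\omega_{FS}^{k_n})\bigr),\psi\bigr\rangle\Bigr|\ \le\ \frac{C\,\delta_n S_n}{d_n}\,\|\psi\|_{\mathscr{C}^2},
\]
valid for every real $(k-1,k-1)$-form $\psi$ of class $\mathscr{C}^2$, with a constant $C=C(X,L)$. Once this is proved the theorem follows immediately: $\{S_n\delta_n d_n^{-1}\}$ tends to $0$ and $\|\psi\|_{\mathscr{C}^2}$ is bounded on a bounded family, so the right-hand side goes to $0$ uniformly. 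Splitting $\psi$ into real and imaginary parts, we may assume $\psi$ real.

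First I would transfer the pairing to the target space. For a PLB probability measure $\mu$ on $\mathbb{P}H^0(X,L^n)$ one has $\langle F_n^\star(\mu),\psi\rangle=\langle\mu,(F_n)_\star\psi\rangle$, where $(F_n)_\star\psi:=(\pi_2)_\star(\pi_1^\star\psi\wedge[\Gamma_n])$ is an $L^1$ function $\Psi$ on $\mathbb{P}H^0(X,L^n)$, which can be identified with $s\mapsto\langle[\bm{Z}_{\bm{s}}],\psi\rangle$. Since $\sigma_n$ is moderate, hence PLB, and $\omega_{FS}^{k_n}$ is smooth, both pairings are legitimate, and the left-hand side above equals $d_n^{-1}\bigl(\int\Psi\,d\sigma_n-\int\Psi\,\omega_{FS}^{k_n}\bigr)$. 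So it suffices to bound $\int\Psi\,d\sigma_n-\int\Psi\,\omega_{FS}^{k_n}$ by $C\,\delta_n S_n\|\psi\|_{\mathscr{C}^2}$.

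The key step is to exhibit $\Psi$, up to an additive constant and a dilation by $\delta_n$, as a difference of two functions in $Q(\mathbb{P}H^0(X,L^n),\omega_{FS})$. Push-forward by the meromorphic transform $F_n$ commutes with $dd^c$ and preserves positivity, so $dd^c\Psi=(F_n)_\star(dd^c\psi)$. Writing the top-degree form $dd^c\psi$ as $f\,\omega^k$ with $\sup_X|f|\le C_0\|\psi\|_{\mathscr{C}^2}$, we see that $dd^c\psi$ is a difference of two nonnegative $(k,k)$-forms dominated by $C_0\|\psi\|_{\mathscr{C}^2}\,\omega^k$, hence $\pm\,dd^c\Psi\le C_0\|\psi\|_{\mathscr{C}^2}\,\Theta_n$ with $\Theta_n:=(F_n)_\star(\omega^k)$. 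Now $\Theta_n$ is a positive closed $(1,1)$-current on $\mathbb{P}H^0(X,L^n)$ whose cohomology class is $\delta_n\{\omega_{FS}\}$ by the very definition of the intermediate degree $\delta_n=\lambda_{k_n-1}(F_n)$; since on a projective space every positive closed $(1,1)$-current has a global quasi-potential, $\Theta_n=\delta_n\omega_{FS}+dd^c g_n$ for some $\delta_n\omega_{FS}$-p.s.h.\ $g_n$. Putting $C'=C_0\|\psi\|_{\mathscr{C}^2}$, the two current inequalities give $\phi_1:=\Psi+C'g_n$ and $\phi_2:=-\Psi+C'g_n$, both in $Q(\mathbb{P}H^0(X,L^n),C'\delta_n\omega_{FS})$, whence $\Psi=\tfrac12(\phi_1-\phi_2)$.

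Finally I would invoke the definition of $S_n$. If $\phi\in Q(\mathbb{P}H^0(X,L^n),C'\delta_n\omega_{FS})$, then $\phi/(C'\delta_n)$, after subtracting its $\omega_{FS}^{k_n}$-average, belongs to the class defining $S_n=S(\mathbb{P}H^0(X,L^n),\omega_{FS},\sigma_n)$, so $|\int\phi\,d\sigma_n-\int\phi\,\omega_{FS}^{k_n}|\le C'\delta_n S_n$. Applying this to $\phi_1$ and $\phi_2$ and using $\Psi=\tfrac12(\phi_1-\phi_2)$ yields $|\int\Psi\,d\sigma_n-\int\Psi\,\omega_{FS}^{k_n}|\le C'\delta_n S_n=C_0\|\psi\|_{\mathscr{C}^2}\,\delta_n S_n$, i.e.\ the claimed estimate with $C=C_0$. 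The main obstacle is the structural step of the third paragraph: one needs that $(F_n)_\star\psi$ is a genuine $L^1$ function satisfying $dd^c(F_n)_\star\psi=(F_n)_\star(dd^c\psi)$, that push-forward under $F_n$ is positive and $dd^c$-compatible, and that $\{(F_n)_\star\omega^k\}=\delta_n\{\omega_{FS}\}$ — these are precisely the regularity and degree properties of meromorphic transforms established by Dinh--Sibony \cite{ds1}. The PLB property of $\sigma_n$ (automatic since it is moderate) is what makes $F_n^\star(\sigma_n)$ and the integrals $\int\Psi\,d\sigma_n$ meaningful.
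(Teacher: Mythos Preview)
Your argument is correct and is essentially the proof from Dinh--Sibony \cite{ds1}; the present paper does not reprove Theorem~3.5 at all but simply cites it (``In fact, Dinh and Sibony proved the above two theorems for any countable family of compact K\"{a}hler manifolds with meromorphic transformations''), and later, in the proof of Theorem~1.2, invokes the very quantitative inequality you derive as \cite[Lemma~4.2(c)]{ds1}. So there is no in-paper proof to compare against---you have reconstructed the argument the paper outsources, and done so accurately: the decomposition $\Psi=\tfrac12(\phi_1-\phi_2)$ with $\phi_i\in Q(\mathbb{P}^{k_n},C'\delta_n\omega_{FS})$ via the potential of $(F_n)_\star\omega^k$, followed by the normalization that feeds into the definition of $S_n$, is exactly the Dinh--Sibony mechanism.

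One cosmetic slip: your first line says ``reduce Theorem~3.6,'' but you mean Theorem~3.5 (Theorem~3.6 in the paper is the Tian--Zelditch asymptotic).
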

In fact, Dinh and Sibony proved the above two theorems for any countable family of compact K\"{a}hler manifolds with meromorphic transformations.
The following theorem is due to Tian-Zelditch \cite{zs}.
\begin{theorem}
For all $r\geq 0$, $\|n^{-1}\Phi^{\star}_{n}(\omega_{FS})-\omega\|_{\mathscr{C}^{r}}=O(n^{-1})$.
\end{theorem}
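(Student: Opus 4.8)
The plan is to reduce the statement to the asymptotic expansion of the Bergman kernel function and then differentiate. Fix a contractible Stein chart $U\subset X$ with a holomorphic frame $e_L$ of $L$, and write $h(e_L,e_L)=e^{-2\varphi}$, so that $\omega=c_1(h)=dd^c\varphi$ on $U$. Let
\[
B_n:=\sum_{j=0}^{k_n}h_n(s_{n,j},s_{n,j})
\]
be the Bergman kernel function of $H^0(X,L^n)$; it is a smooth positive function on $X$ (for $n$ large, by very ampleness), globally defined and independent of the chosen orthonormal basis. On $U$ one has $\sum_j|\tilde s_{n,j}|^2=B_n\,e^{2n\varphi}$, so the local formula $\Phi_n^{\star}(\omega_{FS})=\tfrac12 dd^c\log\sum_j|\tilde s_{n,j}|^2$ gives the globally defined identity
\[
\Phi_n^{\star}(\omega_{FS})=n\omega+\tfrac12 dd^c\log B_n .
\]
Hence $n^{-1}\Phi_n^{\star}(\omega_{FS})-\omega=\tfrac1{2n}dd^c\log B_n$, and the theorem becomes equivalent to the uniform bound $\|dd^c\log B_n\|_{\mathscr{C}^r}=O(1)$ as $n\to\infty$, for each $r\geq 0$.

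The next step is to invoke the Tian--Yau--Zelditch--Catlin expansion of the Bergman kernel with uniform $\mathscr{C}^r$ estimates: there exist smooth functions $b_0,b_1,\dots$ on $X$ with $b_0>0$ (in fact $b_0\equiv c_1(L)^k/k!$ is constant for the normalization of the inner product used here, though this is not needed) such that for all $r,N\geq 0$,
\[
\bigl\| n^{-k}B_n-\sum_{i=0}^{N}b_i\,n^{-i}\bigr\|_{\mathscr{C}^r}=O(n^{-N-1}).
\]
Taking $N=0$ yields $n^{-k}B_n=b_0+O(n^{-1})$ in $\mathscr{C}^r$; in particular, for $n$ large, $n^{-k}B_n$ takes values in a fixed compact subinterval of $(0,\infty)$, uniformly in $x$ and $n$. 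Composing with $\log$, smooth there, and using the chain-rule (Fa\`a di Bruno) estimate — which controls the $\mathscr{C}^r$ norm of $\log(n^{-k}B_n)$ by that of $n^{-k}B_n$ together with the uniform lower bound — one gets $\log B_n=k\log n+\log b_0+O(n^{-1})$ in $\mathscr{C}^r$. Applying $dd^c$ annihilates the constant $k\log n$, so $dd^c\log B_n=dd^c\log b_0+O(n^{-1})$ in $\mathscr{C}^r$ (the error term being $dd^c$ of an $O(n^{-1})$ quantity in $\mathscr{C}^{r+2}$); therefore $\|dd^c\log B_n\|_{\mathscr{C}^r}\leq\|dd^c\log b_0\|_{\mathscr{C}^r}+O(n^{-1})=O(1)$, and dividing by $2n$ gives $\|n^{-1}\Phi_n^{\star}(\omega_{FS})-\omega\|_{\mathscr{C}^r}=O(n^{-1})$ (indeed $O(n^{-2})$ once one uses that $b_0$ is constant, but $O(n^{-1})$ is all that is asserted).

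The hard part is the second step. The existence of the Bergman kernel expansion with \emph{uniform} control in every $\mathscr{C}^r$ norm is itself a deep result: Tian's peak-section method gives the $\mathscr{C}^2$, $O(n^{-1/2})$ version, while the full $\mathscr{C}^\infty$ expansion with error rates is due to Zelditch (via the Boutet de Monvel--Sj\"ostrand parametrix for the Szeg\H{o} kernel), Catlin, Lu, and Ma--Marinescu. Granting this input, the rest is routine: the frame computation identifying $n^{-1}\Phi_n^{\star}(\omega_{FS})-\omega$ with $\tfrac1{2n}dd^c\log B_n$, and the elementary if slightly technical $\mathscr{C}^r$ estimate for $\log B_n$, whose only genuine ingredient is the uniform lower bound $B_n\geq c\,n^k$.
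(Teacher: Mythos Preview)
The paper does not supply its own proof of this statement: it records the theorem as a known result of Tian and Zelditch and simply cites \cite{zs}. Your proposal therefore goes beyond what the paper does, giving an outline of the Zelditch argument. The outline is correct: the local frame computation reducing everything to $\tfrac{1}{2n}dd^c\log B_n$ is standard, and the remaining work is exactly the $\mathscr{C}^\infty$ Bergman (or Szeg\H{o}) kernel expansion with uniform remainder control, which you correctly attribute to Zelditch, Catlin, Lu, and Ma--Marinescu. Once that expansion is granted, your chain-rule/Fa\`a di Bruno step and the uniform positive lower bound on $n^{-k}B_n$ give the conclusion. So there is no gap in your reasoning; you have simply reproduced the structure of the cited proof, whereas the paper treats the theorem as a black box.
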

In order to prove the main theorem,
we write
\begin{equation*}
\begin{split}
& |\bigl<n^{-1}[\bm{Z}_{\bm{s_{n}}}]-\omega, \psi\bigr>|\leq|\bigl<n^{-1}[\bm{Z}_{\bm{s_{n}}}]-n^{-1}F^{\star}_{n}(\sigma_{n}), \psi\bigr>| \\
& +|\bigl<n^{-1}F^{\star}_{n}(\sigma_{n})-n^{-1}F^{\star}_{n}(\omega_{FS}^{k_{n}}), \psi\bigr>|+
|\bigl<n^{-1}F^{\star}_{n}(\omega_{FS}^{k_{n}})-\omega, \psi\bigr>|, \\
\end{split}
\end{equation*}
for any test form $\psi$ of bidegree $(k-1,k-1)$ on $X$.
It is sufficient to prove that the three terms in the right side of the inequality all tend to $0$ when $n\rightarrow\infty$.
The third one is right due to Theorem 3.6.
The first one holds under the conditions that $R_{n}=o(n), \quad \sum_{n\geq 1}\Delta(nt)<\infty, \quad \forall t>0$ by Theorem 3.4.
The second one is valid when $S_{n}=o(n)$ by Theorem 3.5.
By applying Proposition 3.2 and Proposition 3.3,
the proof is reduced to
the estimates of $R_{n}/n$ and $\sum_{n\geq 1}\Delta(nt)$ for any $t>0$. \\

\noindent $\textbf{End of the proof of Theorem 1.1}$.
We have $F^{\star}_{n}(\omega_{FS}^{k_{n}})=(\Phi_{n})^{\star}\omega_{FS}$ by Lemma 3.1.
It follows from Theorem 3.6 that
\begin{equation}
n^{-1}F^{\star}_{n}(\omega_{FS}^{k_{n}})\rightarrow\omega
\end{equation}
in the weak sense of currents.
We write $\mu_{1,n}=\omega_{FS}^{k_{n}},\mu_{2,n}=\wedge_{j=1}^{k_{n}}(dd^{c}u_{n,j}+\epsilon_{n}\omega_{FS}+\omega_{FS})-\mu_{1,n}$.
Then $\sigma_{n}\leq \mu_{1,n}+\mu_{2,n}$.
Note that $k_{n}=c_{1}(L)^{k}n^{k}/k!+O(n^{k-1})$.
Let $c>(\frac{12}{\rho})^{2c_{1}(L)^{k}/k!}>1$ such that $c^{n^{k}}\geq\frac{1}{\beta_{0}}k_{n}^{3}(\frac{12}{\rho})^{2k_{n}}$,
then $c$ depends only on $X, L$ and $\rho$. Hence $\mu_{2,n}$ is a positive moderate measure satisfying Proposition 2.11.
To estimate $\Delta_{n}$, we consider any q.p.s.h. function $\phi$ on $\mathbb{P}^{k_{n}}$ such that $dd^{c}\phi\geq -\omega_{FS}$
and $\int\phi d\sigma_{n}=0$. Set $\varphi:=\phi-\max_{\mathbb{P}^{k_{n}}}\phi$.
It is obvious that $\varphi\in\mathcal{F}$ by definition in (1). Since $\int\phi d\sigma_{n}=0$,
$\max_{\mathbb{P}^{k_{n}}}\phi\geq 0$. Hence $\varphi\leq\phi$. Then we have
\begin{equation*}
\begin{split}
& \sigma_{n}(\phi<-nt)\leq\sigma_{n}(\varphi<-nt) \\
& \leq\mu_{1,n}(\varphi<-nt)+\mu_{2,n}(\varphi<-nt) \\
& \leq\int\exp(\alpha_{0}(-nt-\varphi))d\mu_{1,n}+\int\exp(\alpha_{0}(\frac{\rho}{4})^{k_{n}}(-nt-\varphi))d\mu_{2,n} \\
& \leq c_{0}k_{n}\exp(-\alpha_{0}nt)+c_{5}(\frac{\rho}{4})^{k_{n}}\exp(-\alpha_{0}(\frac{\rho}{4})^{k_{n}}nt). \\
\end{split}
\end{equation*}
The last inequality follows from Proposition 2.2 and Proposition 2.11.
Then by the definition of $\Delta_{n}$, we have
\begin{equation}
\sum_{n\geq 1}\Delta_{n}(nt)
\leq\sum_{n\geq 1}c_{0}k_{n}\exp(-\alpha_{0}nt)+\sum_{n\geq 1}c_{5}(\frac{\rho}{4})^{k_{n}}\exp(-\alpha_{0}(\frac{\rho}{4})^{k_{n}}nt).
\end{equation}
It is obvious that $\sum_{n\geq 1}n^{k}\exp (-nt)<\infty$ and that
$\exp(-(\frac{\rho}{4})^{k_{n}}nt)$ tends to $1$ when $n$ tends to infinity, $\forall t>0$.
This yields $\sum_{n\geq 1}\Delta_{n}(nt)<\infty$.
By Proposition 3.3 and Proposition 2.11,
\begin{equation}
\limsup_{n\to\infty}R_{n}^{0}/n\leq\lim_{n\to\infty}\frac{1+\log k_{n}}{2n}=0.
\end{equation}
\begin{equation}
\begin{split}
&\limsup_{n\to\infty}R_{n}/n\leq\lim_{n\to\infty}\sup_{\phi\in\mathcal{F}}\left\{-\int\phi d\mu_{1,n}-\int\phi d\mu_{2,n}\right\}/n \\
&\leq\limsup_{n\to\infty}R_{n}^{0}/n+\lim_{n\to\infty}c_{5}(\frac{\rho}{4})^{k_{n}}/(\alpha_{0}(\frac{\rho}{4})^{k_{n}}n)=0  \\
\end{split}
\end{equation}
By Proposition 3.2, (14) and (15), $\limsup_{n\to\infty}S_{n}/n=0$.
Note that $\delta_{n}d_{n}^{-1}=O(\frac{1}{n})$ by Lemma 3.1.
Hence by applying Theorem 3.5, the following sequence
\begin{equation}
n^{-1}F^{\star}_{n}(\sigma_{n})-n^{-1}F^{\star}_{n}(\omega_{FS}^{k_{n}})\rightarrow 0
\end{equation}
in the weak sense of currents.
We know that $F_{n}^{\star}(\delta_{s_{n}})=[\bf{Z}_{s_{n}}]$ by the definition of $F_{n}^{\star}$.
Combined with (13) and (15), Theorem 3.4 implies that for $\sigma$-almost everywhere $s\in\mathbb{P}^{X}$,
the following sequence
\begin{equation}
n^{-1}[\bm{Z}_{\bm{s_{n}}}]-n^{-1}F^{\star}_{n}(\sigma_{n})\rightarrow 0
\end{equation}
in the weak sense of currents.
Then we deduce from (12), (16) and (17) that for $\sigma$-almost everywhere $s\in\mathbb{P}^{X}$,
\begin{equation*}
\begin{split}
& |\bigl<n^{-1}[\bm{Z}_{\bm{s_{n}}}]-\omega, \psi\bigr>|\leq|\bigl<n^{-1}[\bm{Z}_{\bm{s_{n}}}]-n^{-1}F^{\star}_{n}(\sigma_{n}), \psi\bigr>| \\
& +|\bigl<n^{-1}F^{\star}_{n}(\sigma_{n})-n^{-1}F^{\star}_{n}(\omega_{FS}^{k_{n}}), \psi\bigr>|+
|\bigl<n^{-1}F^{\star}_{n}(\omega_{FS}^{k_{n}})-\omega, \psi\bigr>|\rightarrow 0, \\
\end{split}
\end{equation*}
for any test form $\psi$ of bidegree $(k-1,k-1)$ on $X$ when $n$ tends to $\infty$.
That is to say, $n^{-1}[\bf{Z}_{s_{n}}]$ converges weakly to $\omega$.
$\vspace{12pt}$
The proof is completed. \qquad \qquad \quad$\qed$

Now given $X$ and $L$ in Theorem 1.1, we construct a concrete example of a sequence of functions $(u_{n,j})$
satisfying the conditions of the theorem. We require that $u_{n,1}=\cdot\cdot\cdot=u_{n,k_{n}}=u_{n}$.
Notice that we can perturbate $u_{n}$ so that the constants $\xi_{n}, \epsilon_{n}$ do not change and the perturbed functions still
satisfy the conditions in Theorem 1.1.
\begin{example}
\emph{
Let $\pi: \mathbb{C}^{k+1}\setminus\{0\}\rightarrow\mathbb{P}^{k}$ be the natural map.
Consider the map $f:\mathbb{P}^{k}\rightarrow\mathbb{P}^{k}$ with $f[z_{0},...,z_{k}]=[z_{0}^{k},...,z_{k}^{k}]$.
From \cite[Example 1.6.4]{s}, its Green function is $s(z)=\max(\log|z_{0}|,...,\log|z_{k}|)$.
Moreover, $s$ is a H\"{o}lder continuous function with any exponent $0<\rho<1$.
We obtain a well-defined function
\begin{equation}
v:=\max(\log\frac{|z_{0}|}{|z|},...,\log\frac{|z_{k}|}{|z|})
\end{equation}
on $\mathbb{P}^{k}$.
Since $\pi^{\star}(dd^{c}v+\omega_{FS})=dd^{c}s\geq 0$, then $v$ is $\omega_{FS}$-p.s.h. and H\"{o}lder continuous with any exponent $0<\rho<1$.
Denote by $d_{FS}$ the distance induced by Fubini-Study metric.
Let $d_{k}=\sup_{\substack{z,w\in\mathbb{P}^{k} \\z\neq w}}\frac{|v(z)-v(w)|}{d_{FS}(z,w)^{\rho}}$.
We will show that
\begin{equation}
d_{k}\leq\sqrt{\pi}k
\end{equation}
at the end of the example.
For each $n$, we obtain a corresponding function $v_{n}$ using (18) and identifying $\mathbb{P}H^{0}(X,L^{n})$ with $\mathbb{P}^{k_{n}}$.
Consider the functions $u_{n}=c_{n}'v_{n}$ with suitable constants $c_{n}'=O(\frac{1}{n^{k}c^{n^{k}}})<1/c^{n^{k}}$, where $c=(145)^{c_{1}(L)^{k}/k!}$.
Let $\epsilon_{n}:=c_{n}'$. Since $k_{n}=O(n^{k})$, it follows from (11) that $d_{k_{n}}=O(n^{k})$. Consequently, $u_{n}$ is of class $\mathscr{C}^{\rho}$ with modulus $1/c^{n^{k}}$. Moreover, since $v_{n}$ is $\omega_{FS}$-p.s.h., we infer that $u_{n}$ is $\epsilon_{n}\omega_{FS}$-p.s.h..
So $\{u_{n}\}$ satisfy the three conditions in Theorem 1.1.
From the above proof, we see that $\sigma=\prod_{n\geq 1}\sigma_{n}=(dd^{c}u_{n}+\omega_{FS})^{k_{n}}$
satisfies the equidistribution property. }

\emph{Finally we prove (19).
It is sufficient to consider the special case when $|z_{0}|\geq\max\{|z_{1}|,...,|z_{k}|\}, |w_{0}|\geq\max\{|w_{1}|,...,|w_{k}|\}$.
Then
\begin{equation*}
d_{k}=\frac{1}{2}\sup_{\substack{z,w\in K \\z\neq w}}\frac{\bigl |\log(1+|z|^{2})-\log(1+|w|^{2})\bigr |}{d_{FS}(z,w)^{\rho}}
\end{equation*}
where $z=(\frac{z_{1}}{z_{0}},...,\frac{z_{1}}{z_{0}}), w=(\frac{w_{1}}{w_{0}},...,\frac{w_{1}}{w_{0}})\in\mathbb{C}^{k}$
and $K=\{z\in\mathbb{C}^{k}: |z_{i}|\leq 1, 1\leq i\leq k\}$.
Let $g=\sum_{i,j=1}^{2k}g_{ij}dx^{i}\otimes dx^{j}$ be the associated Riemannian metric with
$g_{11}=\frac{1}{\pi}\frac{1+|z|^{2}-|z_{1}|^{2}}{(1+|z|^{2})^{2}}$.
When $r_{1}=|z|, r_{2}=|w|$ are fixed, $d_{FS}(z,w)$ takes its minimum only when $z$ and $w$ are at the same line through the origin in $\mathbb{R}^{2k}$.
The distance is invariant with respect to the orthogonal group $O(2k)$ in this case since the Fubini-Study metric is invariant with respect to the unitary group $U(k)$ on $\mathbb{P}^{k}$.
So we take the simple case when $z=(r_{1},0,...,0), w=(r_{2},0,...,0)$.
Hence
\begin{equation*}
\begin{split}
& d_{k}= \frac{\sqrt{\pi}}{2}\sup_{0\leq r_{1}<r_{2}\leq k}\frac{\log(1+r_{2}^{2})-\log(1+r_{1}^{2})}{(\arctan r_{2}-\arctan r_{1})^{\rho}} \\
&=\frac{\sqrt{\pi}}{2}\sup_{0\leq s_{1}<s_{2}\leq\arctan k}\frac{\log(1+\tan^{2}s_{2})-\log(1+\tan^{2}s_{1})}{(s_{2}-s_{1})^{\rho}} \\
&\leq\frac{\sqrt{\pi}}{2}\max\bigl(\log(1+k^{2}), \sup_{\substack{s_{2}-s_{1}<1 \\0\leq s_{1}<s_{2}\leq\arctan k}}\frac{\log(1+\tan^{2}s_{2})-\log(1+\tan^{2}s_{1})}{s_{2}-s_{1}}\bigr). \\
\end{split}
\end{equation*}
The function $y=\log(1+\tan^{2}x)$ is increasing and convex on $[0,\infty)$.
So the second term in the last inequality is equal to $(\log(1+\tan^{2}s))'\Bigl |_{s=\arctan k}=2k$.
This completes the proof of (19).}
\end{example}

Now we are in a position to prove Theorem 1.2.
\begin{proof}
It follows from Lemma 3.1 and Theorem 3.6 that
\begin{equation}
|\bigl<n^{-1}F^{\star}_{n}(\omega_{FS}^{k_{n}})-\omega, \psi\bigr>|\leq\frac{C_{1}}{n}\|\psi\|_{\mathscr{C}^{2}}
\end{equation}
for some positive constant $C_{1}$ depending only on $X, L$.
We know that $S_{n}=O(\log n)$ by using Proposition 3.2, (14) and (15),
then Theorem 3.5 and \cite[Lemma 4.2(c)]{ds1} imply that
\begin{equation}
|\bigl<n^{-1}F^{\star}_{n}(\sigma_{n})-n^{-1}F^{\star}_{n}(\omega_{FS}^{k_{n}}), \psi\bigr>|\leq\frac{C_{2}\log n}{n}\|\psi\|_{\mathscr{C}^{2}}
\end{equation}
for some positive constant $C_{2}$ depending only on $X, L$.
Set
\begin{equation*}
E_{n}(\epsilon_{0}):=\bigcup_{\|\psi\|_{\mathscr{C}^{2}}\leq 1}
\{s_{n}\in\mathbb{P}H^{0}(X,L^{n}): |\bigl<n^{-1}[\bm{Z}_{\bm{s_{n}}}]-n^{-1}F^{\star}_{n}(\sigma_{n}), \psi\bigr>|\geq\epsilon_{0}\}
\end{equation*}
for any $\epsilon_{0}>0$.
We define $E_{n}:=E_{n}(\frac{C_{3}\log n}{n})$, where $C_{3}$ is some positive constant depending only on $X, L$.
Note that $R_{n}=O(\log n)$ from inequalities (14) and (15).
By applying \cite[Inequality (4.4)]{ds1}, we deduce that
\begin{equation*}
\sigma_{n}(E_{n})\leq \Delta_{n}(C_{4}\log n).
\end{equation*}
Here $C_{4}$ is a positive constant depending only on $X, L$.
Moreover, $C_{4}$ is sufficiently large such that $\alpha_{0}C_{4}>k+2$
since $C_{3}$ can be chosen sufficiently large. Recall that $\alpha_{0}$ is the constant defined in Proposition 2.2.
Then by (13), we obtain
\begin{equation}
\begin{split}
& \sigma_{n}(E_{n})\leq \Delta_{n}(C_{4}\log n) \\
& \leq c_{0}k_{n}\exp(-\alpha_{0}C_{4}\log n)+c_{5}(\frac{\rho}{4})^{k_{n}}\exp(-\alpha_{0}C_{4}(\frac{\rho}{4})^{k_{n}}\log n) \\
& \leq (c_{0}+c_{5})k_{n}\frac{1}{n^{\alpha_{0}C_{4}}}\leq \frac{C}{n^{2}}. \\
\end{split}
\end{equation}
Here $C$ is a positive constant sufficiently large which depends only on $X, L$.
Note that the third inequality of (22) follows from a direct calculation when $n$ is big enough.
The fact that $k_{n}=O(n^{k})$ yields the last inequality of (22).
By definition of $E_{n}$, we obtain for any point $s_{n}\in\mathbb{P}H^{0}(X,L^{n})\setminus E_{n}$,
\begin{equation}
|\bigl<n^{-1}[\bm{Z}_{\bm{s_{n}}}]-n^{-1}F^{\star}_{n}(\sigma_{n}), \psi\bigr>|
\leq\frac{C_{3}\log n}{n}\|\psi\|_{\mathscr{C}^{2}}.
\end{equation}
It follows from (20),(21) and (23) that
\begin{equation}
|\bigl<n^{-1}[\bm{Z}_{\bm{s_{n}}}]-\omega, \psi\bigr>|\leq\frac{C\log n}{n}\|\psi\|_{\mathscr{C}^{2}}.
\end{equation}
The proof is completed.
\end{proof}

\begin{remark}
Since $\sum_{n=1}^{\infty}\sigma_{n}(E_{n})<\infty$, Theorem 1.2 gives an alternative proof of Theorem 1.1.
\end{remark}

\noindent
G. SHAO,
Universit{\'e} Paris-Sud, Math{\'e}matique - B{\^a}timent 425, 91405
Orsay, France. {\tt shaoguokuang@gmail.com}

\end{document}